\newtheorem{theorem}{Theorem}
\newtheorem{definition}[theorem]{Definition}
\newtheorem{lemma}[theorem]{Lemma}
\newtheorem{remark}[theorem]{Remark}
\newtheorem{proposition}[theorem]{Proposition}
\newtheorem{corollary}[theorem]{Corollary}
\newtheorem{conjecture}[theorem]{Conjecture}
\newcommand{\red}{\mathrm{red}}
\newcommand{\comm}[1]{#1}
\newcommand{\clusterXY}{
\begin{tikzpicture}[->,>=stealth',shorten >=1pt,node distance=1cm,auto,main node/.style={rectangle,rounded corners,draw,align=center}]
\node[main node] (1) {$X$}; 
\node[main node] (2) [right of=1] {$Y$}; 
\path [draw,transform canvas={shift={(0,0.1)}}] 
(1) edge node {} (2);
\path [draw,transform canvas={shift={(0,-0.1)}}] 
(1) edge node {} (2);
\end{tikzpicture}
}
\newcommand{\clusterXYtrip}{
\begin{tikzpicture}[->,>=stealth',shorten >=1pt,node distance=1cm,auto,main node/.style={rectangle,rounded corners,draw,align=center}]
\node[main node] (1) {$X$}; 
\node[main node] (2) [right of=1] {$Y$}; 
\path [draw,transform canvas={shift={(0,0.15)}}] 
(1) edge node {} (2);
\path 
(1) edge node {} (2);
\path [draw,transform canvas={shift={(0,-0.15)}}] 
(1) edge node {} (2);
\end{tikzpicture}
}
\title{On shortening u-cycles and u-words for permutations}
\author{Sergey Kitaev\footnote{Department of Computer and Information Sciences,
  University of Strathclyde, 26 Richmond Street,
  Glasgow G1 1XH, United Kingdom,
  \texttt{sergey.kitaev@strath.ac.uk}}, Vladimir N. Potapov\footnote{Sobolev Institute of Mathematics, 4 Acad. Koptyug Ave,
  630090 Novosibirsk, Russia,
  \texttt{vpotapov@math.nsc.ru}}, and Vincent Vajnovszki\footnote{Lib, Universit\'e de Bourgogne Franche-Comt\'e, 
BP 47870, 21078 Dijon Cedex, France, 
  \texttt{vvajnov@u-bourgogne.fr}}}
\begin{document}  

\maketitle

\abstract{This paper initiates the study of shortening universal cycles (u-cycles) and universal words (u-words) for permutations  either by using incomparable elements, or by using non-deterministic symbols. The latter approach is similar in nature to the recent relevant studies for the de Bruijn sequences. A particular result we obtain in this paper is that u-words for $n$-permutations exist of lengths $n!+(1-k)(n-1)$ for $k=0,1,\ldots,(n-2)!$. }  

\section{Introduction}

Chung et al.\ \cite{CDG} introduced the notion of a {\em universal cycle}, or {\em u-cycle}, for permutations, which is a cyclic word such that any permutation of fixed length is order-isomorphic to exactly one factor (that is, to an interval of consecutive elements) in the word. In fact, the notion of a u-cycle for permutations can be extended to that of a u-cycle for any combinatorial class of objects admitting encoding by words \cite{CDG}. In particular, universal cycles for sets of words are nothing else but the celebrated {\em de Bruijn sequences} \cite{CDG}. De Bruijn sequences are a well studied direction in discrete mathematics, and over the years they found widespread use in real-world applications, e.g.\ in the areas of molecular biology \cite{compeau:11}, computer security \cite{MR653429}, computer vision \cite{pscf:05}, robotics \cite{scheinerman:01} and psychology \cite{sbsh:97}.

The existence of u-cycles (of length $n!$) for $n$-permutations (that is, permutations of length $n$) was shown in  \cite{CDG} for any $n$ via {\em clustering} the {\em graph of overlapping $n$-permutations}. This graph has $n!$ vertices labelled by $n$-permutations, and there is an edge $x_1x_2\cdots x_n\rightarrow y_1y_2\cdots y_n$ if and only if the words $x_2x_3\cdots x_n$ and $y_1y_2\cdots y_{n-1}$ are order-isomorphic, that is, if and only if $x_i<x_j$ whenever $y_{i-1}<y_{j-1}$ for all $2\leq i<j\leq n$.

A {\em pattern} of length $k$ is a permutation of $\{1,2,\ldots,k\}$. Each cluster collects all $n$-permutations whose first $n-1$ elements form the same pattern, that is, these elements in each permutation in the cluster  are order-isomorphic to the same $(n-1)$-permutation. We call such a pattern the {\em signature} of a cluster, and we denote a signature by ``$\pi$'' where $\pi$ is an $(n-1)$-permutation. See Figure~\ref{clustering-order-3} for the case of $n=3$, and  Figure~\ref{clustering-order-4} for the case of $n=4$ where clusters are thought of as ``super nodes''.  There is exactly one edge associated with each permutation $x_1x_2\cdots x_n$, which goes to the cluster with the signature that is order-isomorphic to $x_2x_3\cdots x_n$. The edges are also viewed as edges between clusters.

Any Eulerian cycle in a graph formed by clusters can be extended to a Hamiltonian cycle in the graph of overlapping permutations (since each edge corresponds to exactly one permutation and we know this permutation). At least some of these Hamiltonian cycles (possibly all, which is conjectured),  can be extended to u-cycles for permutations via linear extensions of partially ordered sets as described in~\cite{CDG}. 

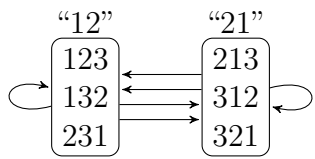
\begin{figure}[h]
\begin{center}
\comm{
\begin{tikzpicture}[->,>=stealth',shorten >=1pt,node distance=2cm,auto,main node/.style={rectangle,rounded corners,draw,align=center}]

\node[main node] (1) {123 \\ 132 \\  231}; 
\node (3) [above of=1,node distance=1cm] {``12''};
\node[main node] (2)  [right of=1] {213 \\ 312 \\  321}; 
\node (4) [above of=2,node distance=1cm] {``21''};

\path[draw,transform canvas={shift={(0,-0.1)}}] 
(1) edge node {} (2);
\path[draw,transform canvas={shift={(0,-0.3)}}] 
(1) edge node {} (2);

\path[draw,transform canvas={shift={(0,0.1)}}] 
(2) edge node {} (1);
\path[draw,transform canvas={shift={(0,0.3)}}] 
(2) edge node {} (1);

\path
(1) edge [loop left] node {} (1);
\path
(2) edge [loop right] node {} (2);

\end{tikzpicture}}
\end{center}
\caption{Clustering the graph of overlapping permutations of order 3}\label{clustering-order-3}
\end{figure}

Removing the requirement for a u-cycle to be a cyclic word, while keeping the other properties, we obtain a {\em universal word}, or {\em u-word}. Of course, existence of a u-cycle $u_1u_2 \cdots u_N$ for $n$-permutations trivially implies existence of the u-word $u_1u_2 \cdots u_Nu_1u_2\cdots u_{n-1}$ for $n$-permutations; the reverse to this statement may not be true.    

\begin{remark}\label{important-remark} {\em It is important to note that {\em any} Hamiltonian path (given by a Hamiltonian cycle) in a graph of overlapping permutations can be easily turned into a u-word for permutations by the methods described in~\cite{CDG}. Indeed, the real problem in the method is dealing with the cyclic nature of a u-cycle making sure that the beginning of it is compatible with the end, while in the case of u-words there are no such complications. As a less relevant observation, note that for the classical de Bruijn sequences, we never have such problems as there is a one-to-one correspondence between Hamiltonian cycles in de Bruijn graphs and de Bruijn sequences.} \end{remark}

In this paper we deal both with the cyclic and non-cyclic cases related to the objects introduced below. This will cause no confusion though as from the context, it will always be clear which case we mean. 

U-cycles and u-words provide an optimal encoding of a set of combinatorial objects in the sense that such an encoding is shortest possible. However, as is discussed in~\cite{CKMS} for the case of {\em de Bruijn sequences}, one can still shorten u-cycles/u-words by using non-deterministic symbols. The studies in \cite{CKMS}, mainly related to binary alphabets, were extended in \cite{Goeckner-et-al} to the case of non-binary alphabets. In this paper, we will utilise the ``shortening'' idea, approaching the problem of shortening u-cycles and u-words for permutations from two different angles discussed next. 
\begin{itemize}
\item Our non-determinism will be in using {\em incomparable elements} and considering {\em linear extensions of partial orders}, and we will study compression possibilities for u-cycles and u-words  for permutations. 
\item Our second approach is a plain extension of the studies in \cite{CKMS,Goeckner-et-al} to the case of permutations. However, using the ``wildcard'' symbol $\Diamond$ seems to be inefficient in the context (it is dominated by non-existence results; see Section~\ref{usage-diamonds}), so we consider its refinement $\Diamond_D$, where $D$ is a subset of the alphabet in question (see Section~\ref{usage-diamond-a-b}).   
\end{itemize}

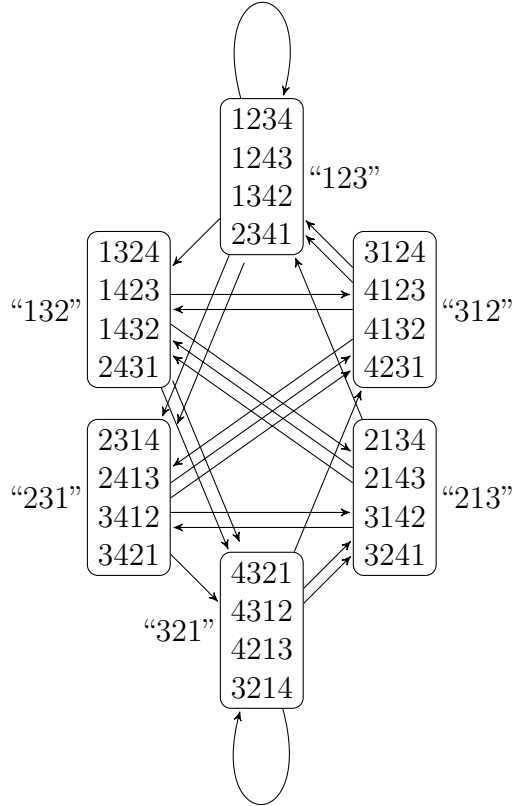
\begin{figure}[h]
\begin{center}
\comm{
\begin{tikzpicture}[->,>=stealth',shorten >=1pt,node distance=2.5cm,auto,main node/.style={rectangle,rounded corners,draw,align=center}]


\node[main node] (1) {1234 \\ 1243 \\ 1342 \\ 2341}; 
\node (7) [right of=1,node distance=1.1cm] {``123''};
\node[main node] (2) [below left of=1] {1324 \\ 1423 \\ 1432 \\ 2431};
\node (10) [left of=2,node distance=1.1cm] {``132''};
\node[main node] (3) [below right of=1] {3124 \\ 4123 \\ 4132 \\ 4231};
\node (8) [right of=3,node distance=1.1cm] {``312''};
\node[main node] (4) [below of=2] {2314 \\ 2413 \\3412 \\ 3421};
\node (11) [left of=4,node distance=1.1cm] {``231''};
\node[main node] (5) [below of=3] {2134 \\ 2143 \\ 3142 \\ 3241};
\node (9) [right of=5,node distance=1.1cm] {``213''};
\node[main node] (6) [below left of=5] {4321 \\ 4312 \\ 4213 \\ 3214};
\node (12) [left of=6,node distance=1.1cm] {``321''};

\path
(1) edge node {} (2)
     edge node {} (4);

\path [draw,transform canvas={shift={(0.2,-0.1)}}] 
(1) edge node {} (4);

\path [draw,transform canvas={shift={(0,0.2)}}] 
(2) edge node {} (3)
     edge node {} (5);

\path [draw,transform canvas={shift={(0.15,0.1)}}] 
(2) edge node {} (6);

\path     
(2) edge node {} (6);

\path [draw,transform canvas={shift={(0,-0.2)}}] 
(3) edge node {} (1);

\path
(3) edge node {} (1)
     edge node {} (2)
     edge node {} (4);

\path [draw,transform canvas={shift={(0,-0.4)}}] 
(4) edge node {} (3);

\path [draw,transform canvas={shift={(0,-0.2)}}] 
(4) edge node {} (3)
     edge node {} (5)
     edge node {} (6);

\path [draw,transform canvas={shift={(0,-0.4)}}] 
(5) edge node {} (4);

\path [draw,transform canvas={shift={(0,-0.2)}}] 
(5) edge node {} (2);

\path
(5) edge node {} (1)
     edge node {} (2);

\path
(6) edge node {} (5)
     edge node {} (3);

\path [draw,transform canvas={shift={(0,-0.2)}}] 
(6) edge node {} (5);
     
\path
(1) edge [loop above] node {} (1);
\path
(6) edge [loop below] node {} (6);

\end{tikzpicture}}
\end{center}
\caption{Clustering the graph of overlapping permutations of order 4}\label{clustering-order-4}
\end{figure}

\subsection{Using linear extensions of partially ordered sets (posets) for shortenning}\label{intro-linear-ext-sec}
To illustrate our idea, consider the word $112$, which is claimed by us to be a u-cycle\footnote{We modify the notion of a u-cycle for $n$-permutations introduced in \cite{CDG}  by allowing equal elements in a factor of length $n$ and declaring them to be incomparable. Note that we still call the obtained object a ``u-cycle for permutations''.} for all permutations of length 3, thus shortening a ``classical'' u-cycle for these permutations, say, $145243$. Indeed, we treat equal elements as {\em incomparable elements}, while the relative order of these incomparable elements to the other elements must be respected. Thus, $112$ encodes all permutations whose last element is the largest one, namely, $123$ and $213$; starting at the second position (and reading the word cyclically), we obtain the word $121$ encoding the permutations $132$ and $231$, and finally, starting at the third position, we (cyclically) read the word $211$ encoding the permutations $312$ and $321$.   More generally, it is clear that the word 
$\underbrace{11\cdots 1}_{n-1\mbox{\tiny\ times}}2=1^{n-1}2$ encodes all permutations and is of length $n$ (instead of length $n!$ for earlier defined u-cycles for permutations). However, there are other compression possibilities creating u-cycles of lengths between $n$ and $n!$.  For example, the word $1232$ is also a u-cycle for permutations of length $3$. Note that the word of the form $11\cdots 1$ is a (trivial) {\em u-word} for all permutations of the respective length (when words are not read cyclically), while this word is not a u-cycle because the definition of a u-cycle cannot be applied to it. 

The main goal of this paper is to study compression possibilities for (classical) u-cycles and u-words for permutations. In particular, we will show that such u-words exist of lengths $n!+(1-k)(n-1)$ for $k=0,1,\ldots,(n-2)!$ (see Theorem~\ref{thm1}) and we conjecture that a similar result is true for u-cycles (see Conjecture~\ref{conj1}). More specifically, our concern will be in existence of u-cycles/u-words for permutations in which equal elements do not stay closer than a fixed number of elements $d\geq 1$ from each other, that is, when there are at least $d-1$ other elements between any pair of equal elements. Note that the case of $d\geq n$  is not interesting when dealing with $n$-permutations since then equal elements cannot appear in the same factor of length $n$, and therefore, such a problem would be equivalent to constructing classical u-cycles/u-words for $n$-permutations, which has already been solved. Thus, the interesting values for $d$ for us are between 1 and $n-1$. 

Finally, note that the problem can be modified by requiring from equal elements to stay {\em exactly}, rather than {\em at least}, at distance $d$, $1\leq d\leq n-1$, from each other, and then one can study the lengths of possible u-cycles/u-words for permutations, if any. Both problems are, of course, equivalent for the case $d=n-1$, which we deal with in Section~\ref{main-gen-res-sec}.

\subsection{Using $\Diamond$s for shortenning}\label{wild-subsec}

In \cite{CKMS,Goeckner-et-al} u-cycles for words (de Bruijn sequences) and u-words for words are shortened using the $\Diamond$ symbol playing the role of a ``wildcard'' symbol, or a ``universal symbol''. Any word containing a $\Diamond$ is called a {\em partial word}, or {\em p-word} in \cite{CKMS,Goeckner-et-al}, and the universal cycles/words obtained by shortening with $\Diamond$s are called, respectively, {\em universal partial cycles}, or {\em u-p-cycles}, and {\em universal partial words}, or {\em u-p-words}. For example, $u=\Diamond\Diamond0111$ is a u-p-word for binary words of length 3, since 
\begin{itemize}
\item $\Diamond\Diamond0$ covers 000, 010, 100 and 110; 
\item $\Diamond01$ covers 001 and 101; and
\item the remaining factors in $u$ cover 011 and 111.
\end{itemize}

As a straightforward extension of the objects in \cite{CKMS,Goeckner-et-al} to the case of permutations, our u-cycles and u-words will contain $\Diamond$(s), whose meaning needs to be redefined though to avoid factors not order-isomorphic to permutations. In analogy with \cite{CKMS,Goeckner-et-al}, we call u-cycles and u-words for permutations containing at least one $\Diamond$ universal partial cycles (u-p-cycles) and universal partial words (u-p-words) for permutations, respectively. Introducing these notions helps us to distinguish between shortening using linear extensions of posets (when the resulting objects are still called by us u-cycles and u-words; see Section~\ref{intro-linear-ext-sec}), and shortening using $\Diamond$s, in which case the obtained objects are called u-p-cycles and u-p-words.

To see which of the $n$-permutations are covered by a factor of length $n$, we keep the same relative order of non-$\Diamond$ elements, and insert all possible elements instead of the $\Diamond$(s) that will result in the reduced form (see Subsection~\ref{definitions} for definitions) in an $n$-permutation. Following this definition, for $n=3$, $1\Diamond2$ covers the permutations 213, 123 and 132, while for $n=4$, $1\Diamond 2\Diamond$ covers the following 12 permutations: 3142, 3241, 2143, 2341, 2134, 2431, 1243, 1342, 1234, 1432, 1324 and 1423. Any factor of length  $n$ with $k$ $\Diamond$s covers $\frac{n!}{(n-k)!}$ permutations. Indeed, the number of ways to pick values for the $\Diamond$s is ${n\choose k}$, and there are $k!$ ways to arrange these values.

We say that a u-p-word for $n$-permutations is {\it trivial} if it contains only $\Diamond$s. Obviously, $\Diamond$ is the only u-p-word for the permutation of length 1. Also, $\Diamond 1$ is a u-p-word for 2-permutations.
Proposition~\ref{diamond-at-first} below shows that if $n\geq 3$ then there is no u-p-word containing a single $\Diamond$ that is placed in position 1. This result, along with Proposition~\ref{prop1} and Corollaries \ref{cor-10} and \ref{nice-corol}, led us to the observation that usage of $\Diamond$s in u-p-cycles, or u-p-words, for permutations may be too restrictive to be of practical use, and instead of a $\Diamond$, one should use a {\em restricted} $\Diamond$ denoted $\Diamond_{D}$, where $D$ is a subset of $\{1, 2,\ldots,n\}$ and $n$ is the size of permutations in question. Indeed, even though no u-p-word for 3-permutations of the form $\Diamond x_1x_2\cdots x_k$ exists by Proposition~\ref{diamond-at-first}, for example, $\Diamond_{1,2}254231$ is a u-p-word for 3-permutations (in particular, the factor $\Diamond_{1,2}25$ covers the permutations 123 and 213). See Theorem~\ref{restr-Diamond} for a result in this direction. 

So, $\Diamond_D$ gives the permissible extensions out of $n$ possible extensions given by $\Diamond$. However, note that the notion of a $\Diamond_D$ is well-defined only if there is {\em at most} one  $\Diamond_D$ in any factor of length $n$, since there is no meaning of, for example, the factor $\Diamond_{1,2}\Diamond_{1,2}\Diamond_{1,2}1$ for $n=4$. Having said that, it is always acceptable to have $\Diamond_{D_1}$, $\Diamond_{D_2},\ldots,\Diamond_{D_k}$ inside the same factor of length $n$ as long as $D_1\cap D_2\cap\cdots\cap D_k=\emptyset$.  

\subsection{Some basic definitions}\label{definitions}

For a word $w=w_1\cdots w_n$ over an ordered alphabet, we let $\red(w)$ denote the word that is obtained from $w$ by replacing each copy of the $i$-th smallest element in $w$ by $i$. For example, $\red(2547)=1324$, $\red(5470)=3241$ and $\red(436326)=324214$.  

Let $\pi$ be a permutation of $\{1,\ldots,n\}$ and $x$ an element of $\{1,\ldots,n\}$. For $x<n$, we let $x^+$ denote a number $y$ such that $x<y<x+1$, while for $x=n$, $x^+=n+1$. Also, for $x>1$, we let $x^-$ denote an element $y$ such that $x-1<y<x$, while for $x=1$, $x^-=0$. The definitions of $x^+$ and $x^-$ can be generalized to any word instead of a permutation $\pi$ in a straightforward way, namely, $x^+$ refers to an element larger than $x$ but less than next largest element (if it exists), while $x^-$ refers to an element smaller than $x$ but larger than next smallest element (if it exists).   

The {\em complement} of an $n$-permutation $\pi_1\pi_2\cdots \pi_n$ is  the permutation obtained by replacing $\pi_i$ by $n+1-\pi_i$. For example, the complement of $2314$ is $3241$. The {\em reverse} of a permutation is the permutation written in the reverse order. For example, the reverse of $2341$ is $1432$.  

\subsection{Organization of the paper} 

This paper is organized as follows. In Section~\ref{lin-ext-sec} we discuss shortening u-cycles and u-words for permutations via linear extensions of posets and present a key result, Theorem~\ref{thm1}, giving possible lengths of u-words for permutations. An extension of the results in Section~\ref{lin-ext-sec} in the case of $n=4$ is discussed in Section~\ref{possible-lengths-u-cycles-sec}. In Section~\ref{sec3} we discuss the usage of $\Diamond$ (see Section~\ref{usage-diamonds}) and $\Diamond_D$ for the special case of $D$ being of size 2 (see Section~\ref{usage-diamond-a-b}) in the context of shortening u-cycles and u-words for permutations.  Finally, in Section~\ref{final-sec} we give some concluding remarks and state some problems for further research.

\section{Shortening u-cycles/u-words for permutations via linear extensions of posets}\label{lin-ext-sec}

In Section~\ref{main-gen-res-sec} we will derive Theorem~\ref{thm1} showing possible lengths of u-words when incomparable elements are allowed at distance $n-1$ for $n$-permutations. In Section~\ref{possible-lengths-u-cycles-sec} we will provide an example for $n=4$ of a shorter u-cycle  than those given by Theorem~\ref{thm1}. The example was obtained by allowing incomparable elements to be closer to each other (to be at distance 2 rather than at distance 3). 

\subsection{Incomparable elements at distance $n-1$ for $n$-permutations}\label{main-gen-res-sec}

\begin{definition} Two different permutations, $\pi_1\cdots\pi_n$ and $\sigma_1\cdots\sigma_n$, are called {\em twin permutations}, or {\em twins}, if 
\begin{itemize}
\item $\red(\pi_1\cdots\pi_{n-1})=\red(\sigma_1\cdots\sigma_{n-1})$, and
\item $|\pi_n-\pi_1|=|\sigma_n-\sigma_1|=1$.   
\end{itemize}\end{definition}
Examples of twins are 3124 and 4123, 2413 and 3412, and 23451 and 13452.

We refer the Reader to Figures~\ref{clustering-order-3} and~\ref{clustering-order-4} to check their understanding of the following four lemmas in the cases of $n=3$ and $n=4$, respectively. 

\begin{lemma}\label{lem1} Each cluster has exactly one pair of twins. \end{lemma}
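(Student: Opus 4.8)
The plan is to recall that a cluster is indexed by its signature $\pi$, an $(n-1)$-permutation, and consists of exactly those $n$-permutations $x_1 x_2\cdots x_n$ with $\red(x_1\cdots x_{n-1})=\pi$. So fixing the cluster amounts to fixing the pattern of the first $n-1$ entries, and the only remaining freedom is the position of the last entry $x_n$ relative to the already-placed values. First I would observe that, once $\pi$ is fixed, an element of the cluster is determined by choosing where $x_n$ falls among the $n-1$ values used by $\pi$: there are exactly $n$ ``slots'' (below the smallest, between consecutive values, above the largest), and each slot gives a distinct $n$-permutation in the cluster, accounting for all $n$ of them. In this slot encoding, two permutations in the cluster are twins precisely when they agree on the pattern of the first $n-1$ entries (automatic, since they are in the same cluster) and satisfy $|x_n-x_1|=1$ in each; I would translate the condition $|x_n-x_1|=1$ into a statement purely about which slot $x_n$ occupies relative to the first entry $x_1$.

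The key step is this translation. Writing $a=\pi_1=\red(x_1\cdots x_{n-1})_1$ for the rank of the first entry among the first $n-1$ entries, the condition $|x_n - x_1|=1$ in the full $n$-permutation says that $x_n$ is the value immediately above or immediately below $x_1$ with no other value of the permutation strictly between them. In slot language, ``immediately above $x_1$'' means $x_n$ sits in the slot just above rank $a$, i.e.\ $x_n$ has rank $a^{+}$ among all $n$ entries — and this is a single, well-defined slot. Likewise ``immediately below $x_1$'' is the slot just below rank $a$, again a single slot. So the twin condition singles out at most two of the $n$ permutations in the cluster, namely the two obtained by placing $x_n$ in the slot directly above or directly below $x_1$; and these two are distinct permutations, hence form exactly one (unordered) pair. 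Finally I would note both of these placements are always available: there is always a slot directly above and a slot directly below any value, so both twins genuinely exist in the cluster.

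The main obstacle — really the only subtlety — is making the ``slot'' bookkeeping rigorous: one must be careful that the two candidate placements (just above $x_1$, just below $x_1$) always yield two \emph{different} permutations in the cluster and that no \emph{other} placement of $x_n$ can accidentally satisfy $|x_n-x_1|=1$. The first point is immediate since the slot just above and the slot just below $x_1$ are distinct slots. For the second, I would argue that if $x_n$ lies in any slot other than these two, then there is at least one value of the permutation strictly between $x_n$ and $x_1$ (either some value of $\pi$, or nothing is between them only in the two chosen slots), forcing $|x_n-x_1|\geq 2$; this is a short case check using the definition of $\red$ and the ordering of the slots. Once this is in place, ``exactly one pair of twins'' follows immediately, and the examples in Figures~\ref{clustering-order-3} and~\ref{clustering-order-4} can be cited as sanity checks.
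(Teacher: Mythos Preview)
Your proposal is correct and follows essentially the same approach as the paper: both arguments fix the signature $x_1\cdots x_{n-1}$ of a cluster and observe that the twin condition $|x_n-x_1|=1$ is satisfied precisely by the two permutations obtained by adjoining $x_1^{+}$ or $x_1^{-}$, and that these two choices always exist and are distinct. Your ``slot'' language is just a more explicit rephrasing of the paper's use of $x_1^{+}$ and $x_1^{-}$, so the proofs are the same in substance, yours being more detailed.
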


\begin{proof} Let the signature (the first $n-1$ elements of the permutations in the reduced form) of a cluster  be ``$x_1\cdots x_{n-1}$''. The only possibilities to create twin permutations are to adjoin $x^{+}_1$ or $x^{-}_1$ at the end of $x_1\cdots x_{n-1}$, and these possibilities always exist. \end{proof}

By {\em parallel edges} between clusters we mean multiple edges oriented in the same way. In particular, a pair (resp., a triple) of parallel edges is called a {\em double edge} (resp., a {\em triple edge}). In what follows, double and triple edges from a cluster $X$ to a cluster $Y$ will be denoted, respectively, by \clusterXY and \clusterXYtrip.

\begin{lemma}\label{lem2} For any cluster $X$, there exists a unique cluster $Y$ such that \clusterXY. Also, for no clusters $X$ and $Y$, we have \clusterXYtrip. \end{lemma}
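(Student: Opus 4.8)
The plan is to analyze the edge structure emanating from a single cluster $X$ with signature ``$x_1\cdots x_{n-1}$''. Recall that each of the $n!/( n-1)!\cdot\ldots$ — more precisely, each cluster has exactly $(n-1)!/(n-2)!=n-1$... wait, let me recount: a cluster has $n$ permutations? No: a cluster collects all $n$-permutations whose first $n-1$ elements form a fixed pattern, and there are exactly $n$ ways to insert the last element, so each cluster has $n$ vertices, hence $n$ outgoing edges. Each outgoing edge from the permutation $w=w_1\cdots w_n$ goes to the cluster whose signature is $\red(w_2\cdots w_n)$. So the question becomes: among the $n$ permutations in cluster $X$, how can two (or three) of them send their outgoing edge to the \emph{same} target cluster, and is this forced to happen exactly once (for double) and never (for triple)?

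First I would set up the following bijective bookkeeping. Fixing the signature $\pi=x_1\cdots x_{n-1}$ of $X$, the $n$ permutations in $X$ are obtained by choosing where to slot the final letter among the $n$ ``gaps'' determined by the values $x_1,\ldots,x_{n-1}$; equivalently, the last letter's reduced value $v\in\{1,\ldots,n\}$ determines the permutation uniquely, call it $w^{(v)}$. For each such $w^{(v)}$, its successor signature is $\sigma^{(v)}:=\red(w^{(v)}_2\cdots w^{(v)}_n)=\red(x_2'\cdots x_{n-1}' y)$ where $x_2'\cdots x_{n-1}'$ is the pattern of $x_2\cdots x_{n-1}$ and $y$ is the last letter inserted in the appropriate relative position. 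The key observation is that $\sigma^{(v)}$ and $\sigma^{(v')}$ can coincide only if the last letters occupy ``adjacent'' relative positions, and in fact I would show that the map $v\mapsto\sigma^{(v)}$ is constant on a block of consecutive values and injective otherwise; more concretely, two permutations in $X$ share a target iff they differ only by swapping the final letter between two slots that straddle the position of $x_1$ (the letter that gets deleted). This is exactly the mechanism behind Lemma~\ref{lem1}: the twins in $X$ are $w^{(v)}$ and $w^{(v')}$ with $\{v,v'\}$ corresponding to inserting $x_1^+$ or $x_1^-$ at the end, and these are precisely the two permutations whose deletion of the first letter yields order-isomorphic tails. So I expect to prove that $v\mapsto\sigma^{(v)}$ identifies exactly one pair — the twin pair — and is otherwise injective, which gives both halves of Lemma~\ref{lem2}: a unique $Y$ with a double edge $X\to Y$, and no $Y$ with a triple edge.

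The cleanest route is probably to argue via Lemma~\ref{lem1} directly: Lemma~\ref{lem1} says each cluster has exactly one pair of twins, and by definition twins are exactly two distinct permutations $\pi,\sigma$ in $X$ with $\red(\pi_1\cdots\pi_{n-1})=\red(\sigma_1\cdots\sigma_{n-1})$ and $|\pi_n-\pi_1|=|\sigma_n-\sigma_1|=1$. I would show that $\sigma^{(v)}=\sigma^{(v')}$ for $v\neq v'$ \emph{if and only if} $w^{(v)},w^{(v')}$ are twins. The ``if'' direction: if they are twins then $\red(w^{(v)}_2\cdots w^{(v)}_n)=\red(w^{(v')}_2\cdots w^{(v')}_n)$ because deleting the first letter from each and the condition $|\pi_n-\pi_1|=1$ guarantees the two tails are order-isomorphic — I should verify this reduction carefully. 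The ``only if'' direction: if the two successor signatures agree, then since $w^{(v)}$ and $w^{(v')}$ already agree on their first $n-1$ letters' pattern, the agreement of the tails forces the discrepancy between $w^{(v)}$ and $w^{(v')}$ to be ``invisible'' after deleting the first letter, which pins down $v,v'$ to be the two values adjacent to the rank of $x_1$, i.e., the twin pair. Combining with Lemma~\ref{lem1}'s uniqueness, there is exactly one coincidence among the $n$ successor signatures, so exactly one $Y$ receives two edges and none receives three.

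The main obstacle I anticipate is the careful verification that the twin condition $|\pi_n-\pi_1|=1$ is exactly equivalent to ``deleting the first letter makes the two permutations in the cluster share a successor,'' as opposed to merely implying it; in particular I must rule out the possibility that some \emph{non-twin} pair in $X$ accidentally maps to the same cluster, and rule out that \emph{three} permutations (the twin pair plus a third) could all land on $Y$. Both of these are controlled by the rigidity of reduction: once the first $n-1$ letters have a fixed pattern, the successor signature is an injective function of the pair (rank of $x_1$, relative position of the last letter), and I should make this injectivity statement precise, perhaps by exhibiting the inverse map or by a direct counting argument showing the $n$ successor signatures are distinct except for the single forced collision. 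I'd also double-check the edge count is consistent: $n$ edges out of $X$, one collision, so $n-1$ distinct target clusters — which matches the picture in Figures~\ref{clustering-order-3} and~\ref{clustering-order-4}.
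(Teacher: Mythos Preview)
Your proposal is correct and takes essentially the same approach as the paper: parallel edges out of $X$ arise precisely from twin pairs, and Lemma~\ref{lem1} guarantees exactly one such pair per cluster, yielding one double edge and no triple. The paper compresses this into a single sentence (``parallel edges can only be produced by twins \ldots\ but by Lemma~\ref{lem1}, there is only one such pair''), whereas your plan spells out the equivalence $\sigma^{(v)}=\sigma^{(v')}\iff w^{(v)},w^{(v')}$ are twins in both directions; the extra care is not misplaced, but the argument is the same.
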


\begin{proof} Both of the statements follow from the fact that parallel edges can only be produced by twins (the last $(n-1)$ elements in non-twin permutations in a cluster cannot be isomorphic), but by Lemma~\ref{lem1}, there is only one such pair in each cluster. \end{proof}

\begin{lemma}\label{lem3}  For any cluster $Y$, there exists a unique cluster $X$ such that \clusterXY. \end{lemma}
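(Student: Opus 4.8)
The plan is to pin down, for each cluster $X$, exactly which cluster is reached by the unique double edge leaving $X$ that Lemma~\ref{lem2} provides, and then to observe that the resulting map on signatures is a bijection; Lemma~\ref{lem3} is obtained simply by inverting this map.

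First I would record, exactly as in the proof of Lemma~\ref{lem2}, that a double edge into a cluster $Y$ can only be produced by two permutations that lie in one common cluster $X$ and have order-isomorphic last $n-1$ letters, hence --- by the observation made there --- by the pair of twins of $X$, which is unique by Lemma~\ref{lem1}. So it suffices to determine, for the signature $\tau=x_1x_2\cdots x_{n-1}$ of an arbitrary cluster $X$, the signature of the cluster that the double edge out of $X$ enters. By the proof of Lemma~\ref{lem1} the twins of $X$ are $\red(x_1,x_2,\ldots,x_{n-1},x_1^{+})$ and $\red(x_1,x_2,\ldots,x_{n-1},x_1^{-})$, and the edge leaving a permutation $w_1\cdots w_n$ enters the cluster whose signature is $\red(w_2\cdots w_n)$.

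The key step is the reduction computation. I claim $\red(x_2,\ldots,x_{n-1},x_1^{+})=x_2x_3\cdots x_{n-1}x_1$, and similarly with $x_1^{-}$ in place of $x_1^{+}$. Indeed, the values occurring in $x_2,\ldots,x_{n-1},x_1^{+}$ are the elements of $\{1,\ldots,n-1\}\setminus\{x_1\}$ together with $x_1^{+}$, and among them $x_1^{+}$ occupies rank $x_1$ (it exceeds precisely $1,\ldots,x_1-1$ and is smaller than $x_1+1,\ldots,n-1$); hence $\red$ fixes each $x_j$ with $j\geq 2$ and sends $x_1^{+}$ to $x_1$. The same argument applies to $x_1^{-}$. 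Therefore both twins of $X$ point to the cluster with signature $x_2x_3\cdots x_{n-1}x_1$, that is, the left cyclic rotation of $\tau$. Since the left rotation $x_1x_2\cdots x_{n-1}\mapsto x_2x_3\cdots x_{n-1}x_1$ is a bijection of the set of $(n-1)$-permutations onto itself, with inverse the right rotation $\sigma_1\cdots\sigma_{n-1}\mapsto\sigma_{n-1}\sigma_1\cdots\sigma_{n-2}$, it follows that for a cluster $Y$ with signature $\sigma_1\cdots\sigma_{n-1}$ the cluster $X$ with signature $\sigma_{n-1}\sigma_1\cdots\sigma_{n-2}$ is the one and only cluster from which a double edge reaches $Y$.

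The only delicate point --- the place I would expect to be the main obstacle, though it is purely routine --- is the bookkeeping in the reduction step: checking that inserting $x_1^{+}$ (or $x_1^{-}$) into $\{1,\ldots,n-1\}\setminus\{x_1\}$ exactly reinstates the ranks $1,\ldots,n-1$, with $x_1^{+}$ receiving rank $x_1$. Once this is secured, the bijectivity of a cyclic rotation is immediate and the lemma follows. As a sanity check, one may read off Figure~\ref{clustering-order-4} that the double edge out of cluster ``$231$'' indeed enters cluster ``$312$'', in agreement with the left-rotation rule.
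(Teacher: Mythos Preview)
Your proof is correct. Both your argument and the paper's hinge on the same cyclic-rotation relationship between the signatures of $X$ and $Y$, but they approach it from opposite ends: the paper works directly from $Y$, writing down the two words $x_{n-1}^{-}x_1\cdots x_{n-1}$ and $x_{n-1}^{+}x_1\cdots x_{n-1}$ whose last $n-1$ letters reduce to the signature of $Y$ and observing that they lie in a single cluster (with signature the right rotation $x_{n-1}x_1\cdots x_{n-2}$), whereas you parameterise by $X$, compute that its twins both point to the left rotation $x_2\cdots x_{n-1}x_1$, and then invert. The paper's route is marginally shorter since it sidesteps the explicit appeal to bijectivity, but your version makes the left-rotation map and its inverse completely explicit, which dovetails nicely with the cycle-length argument in Lemma~\ref{lem4}.
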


\begin{proof} Let the signature of $Y$  be ``$x_1\cdots x_{n-1}$''. Then the only double edge that can come to $Y$ is given by the permutations $x^{-}_{n-1}x_1\cdots x_{n-1}$ and $x^{+}_{n-1}x_1\cdots x_{n-1}$ (both belonging to the same cluster with the signature ``$x_{n-1}x_1\cdots x_{n-2}$'').  \end{proof}

By Lemmas~\ref{lem2} and~\ref{lem3}, the clustered graph of overlapping permutations can be partitioned into a disjoint union of cycles formed by double edges.  

\begin{lemma}\label{lem4} Any of the disjoint cycles formed by the double edges goes through exactly $n-1$ distinct clusters. \end{lemma}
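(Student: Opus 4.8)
The plan is to track a single double edge around one of the disjoint cycles and show that after exactly $n-1$ steps we return to the starting cluster, and not before. By Lemma~\ref{lem3}, the double edge entering a cluster with signature ``$x_1\cdots x_{n-1}$'' comes from the cluster with signature ``$x_{n-1}x_1\cdots x_{n-2}$''; equivalently, by Lemma~\ref{lem2}, the double edge leaving a cluster with signature ``$y_1\cdots y_{n-1}$'' goes to the cluster whose signature is $\red(y_2\cdots y_{n-1}y_1^{\pm})$, i.e.\ the pattern obtained by deleting $y_1$, cyclically, and reinserting a value adjacent to (the image of) $y_1$ at the end. So the key step is to understand this ``rotation'' operation on signatures.

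First I would set up the right bijective picture. A cleaner way to index the double-edge cycles is by the \emph{cyclic pattern} of the first element together with its orbit: from the proof of Lemma~\ref{lem3}, following double edges backwards replaces ``$x_1x_2\cdots x_{n-1}$'' by ``$x_{n-1}x_1\cdots x_{n-2}$''. I would argue that this backward step is exactly cyclic rotation of the $(n-1)$-letter signature word, after applying $\red$ to renormalise. Concretely, if I write the signature as an $(n-1)$-permutation $\pi=\pi_1\cdots\pi_{n-1}$, then the successor signature along the double-edge cycle is $\red(\pi_{n-1}\pi_1\pi_2\cdots\pi_{n-2})$ where the last letter that gets rotated to the front is actually a value squeezed next to $\pi_1$ — but since we immediately apply $\red$, the resulting pattern depends only on the cyclic word $(\pi_1,\pi_2,\ldots,\pi_{n-1})$ up to rotation. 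Thus the double-edge cycles are in bijection with the orbits of $\mathbb{Z}/(n-1)\mathbb{Z}$ acting by rotation on $(n-1)$-permutations, and the length of each cycle is the size of the corresponding orbit.

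Next, the combinatorial heart: I need that \emph{every} such rotation orbit has size exactly $n-1$, i.e.\ no $(n-1)$-permutation is fixed by a nontrivial rotation. This is standard: if a permutation $\pi$ of $\{1,\ldots,n-1\}$ were invariant under rotation by $d$ with $1\le d<n-1$, then in particular the position of the maximum $n-1$ would be invariant under adding $d$ modulo $n-1$, forcing $d\equiv 0$. (More carefully, since $\gcd(d,n-1)$ divides $n-1$, the orbit of the position of the largest letter under $+d$ has size $(n-1)/\gcd(d,n-1)>1$ unless $d=0$, but a permutation has a unique largest letter, contradiction.) Hence the action is free, every orbit has exactly $n-1$ elements, and every double-edge cycle passes through exactly $n-1$ distinct clusters. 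I would also remark, to match the figures, that there are $(n-1)!/(n-1)=(n-2)!$ such cycles, which dovetails with the $k$-range in Theorem~\ref{thm1}.

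The main obstacle I anticipate is purely notational: making the ``renormalise by $\red$ after reinserting a value next to $\pi_1$'' step precise, and being honest that the double edge from a cluster has a genuine \emph{choice} of two twin permutations but \emph{both} land in the same successor cluster (guaranteed by Lemma~\ref{lem2}), so the successor is well-defined. Once the reduction to rotation of $(n-1)$-permutations is cleanly stated, the freeness of the cyclic action is a one-line argument via the uniqueness of the maximal entry, so I would spend most of the write-up on carefully justifying that reduction rather than on the group-theoretic conclusion.
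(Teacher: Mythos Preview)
Your proposal is correct and follows the same route as the paper: trace the double edge out of the cluster with signature ``$x_1x_2\cdots x_{n-1}$'' and observe that the successive signatures are exactly the cyclic rotations $x_2\cdots x_{n-1}x_1$, $x_3\cdots x_1x_2$, \ldots, returning after $n-1$ steps. The only difference is that where you invoke freeness of the $\mathbb{Z}/(n-1)\mathbb{Z}$-action via the unique position of the maximum, the paper simply remarks that the $x_i$ are all distinct, so the $n-1$ rotated words have pairwise different first letters and are therefore distinct as $(n-1)$-permutations---no group-theoretic detour is needed.
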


\begin{proof}  Since double edges are formed by twin permutations, we can assume that any such cycle is of the form:

\begin{center}
\begin{tikzpicture}[->,>=stealth',shorten >=1pt,node distance=3.4cm,auto,main node/.style={rectangle,rounded corners,draw,align=center}]
\node[main node] (1) {$x_1x_2\cdots x_{n-1}x^{+}_1$ \\ $x_1x_2\cdots x_{n-1}x^{-}_1$}; 
\node[main node] (2) [right of=1] {$x_2x_3\cdots x_{n-1}x_1x^{+}_2$ \\ $x_2x_3\cdots x_{n-1}x_1x^{-}_2$}; 
\node (3) [right of=2,shift={(-1,0)}] {$\cdots$}; 
\node[main node] (4) [right of=3,shift={(-0.8,0)}] {$x_{n-1}x_1\cdots x_{n-2}x^{+}_{n-1}$ \\ $x_{n-1}x_1\cdots x_{n-2}x^{-}_{n-1}$}; 
\path [draw,transform canvas={shift={(0,0.1)}}] 
(1) edge node {} (2);
\path [draw,transform canvas={shift={(0,-0.1)}}] 
(1) edge node {} (2);
\path [draw,transform canvas={shift={(0,0.1)}}] 
(2) edge node {} (3);
\path [draw,transform canvas={shift={(0,-0.1)}}] 
(2) edge node {} (3);
\path [draw,transform canvas={shift={(0,0.1)}}] 
(3) edge node {} (4);
\path [draw,transform canvas={shift={(0,-0.1)}}] 
(3) edge node {} (4);
\end{tikzpicture}
\end{center}
where the last cluster is linked to the first one by a double edge. Since all $x_i$s are distinct, the cycle must involve exactly $n-1$ clusters.\end{proof}

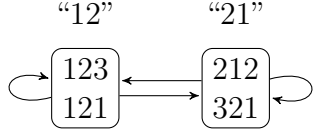
\begin{figure}[h]
\begin{center}
\comm{
\begin{tikzpicture}[->,>=stealth',shorten >=1pt,node distance=2cm,auto,main node/.style={rectangle,rounded corners,draw,align=center}]

\node[main node] (1) {123 \\ 121}; 
\node (3) [above of=1,node distance=1cm] {``12''};
\node[main node] (2)  [right of=1] {212 \\  321}; 
\node (4) [above of=2,node distance=1cm] {``21''};

\path[draw,transform canvas={shift={(0,-0.1)}}] 
(1) edge node {} (2);

\path[draw,transform canvas={shift={(0,0.1)}}] 
(2) edge node {} (1);

\path
(1) edge [loop left] node {} (1);
\path
(2) edge [loop right] node {} (2);

\end{tikzpicture}}
\end{center}
\caption{Applying incomparable elements on distance 2 for 3-permutations}\label{dist-2-order-3}
\end{figure}

\begin{theorem}\label{thm1} Using incomparable elements at distance $n-1$, one can obtain u-words for $n$-permutations of lengths $n!+n-1$, $n!$, $n!-(n-1),\ldots,n!-(n-1)!+n-1$.\end{theorem}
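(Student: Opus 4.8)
The plan is to build the u-words directly from Eulerian structure on the clustered graph of overlapping $n$-permutations, exploiting the cycle decomposition into disjoint double-edge cycles established by Lemmas~\ref{lem1}--\ref{lem4}. The key observation is that each double edge corresponds to a pair of twins $x_1\cdots x_{n-1}x_1^{+}$ and $x_1\cdots x_{n-1}x_1^{-}$, which differ only in whether the last element is slightly above or slightly below the (copy of the) first element. When we traverse such a double edge, rather than spending two separate edges on the two twin permutations, we can replace the appended element by an element \emph{equal} to the first element, thereby making the last and first elements incomparable; such a factor of length $n$ then simultaneously encodes both twins. This is exactly the ``incomparable elements at distance $n-1$'' device: the repeated element sits at positions $1$ and $n$ of the factor, i.e.\ at distance $n-1$.

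First I would recall that any Eulerian cycle in the clustered graph lifts to a Hamiltonian cycle in the graph of overlapping permutations, and (by Remark~\ref{important-remark} and the methods of \cite{CDG}) this Hamiltonian path yields a u-word of length $n!+n-1$ with all distinct-valued factors — this is the $k=0$, length $n!+n-1$ endpoint, obtained without any compression. Next, for each of the $(n-2)!$ disjoint double-edge cycles (there are $(n-2)!$ of them since there are $(n-1)!$ clusters and each such cycle uses exactly $n-1$ of them, by Lemma~\ref{lem4}), I would show that we may ``collapse'' the cycle: traverse its $n-1$ double edges using a single pass that, at each step, records one factor with an incomparable (equal) pair instead of two factors. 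Collapsing one double-edge cycle of length $n-1$ replaces $2(n-1)$ traversed edges (contributing $2(n-1)$ letters to the word in the naive Eulerian traversal, but here the savings must be counted carefully) by $n-1$ traversed ``merged'' edges; the net effect on the total length is a reduction by $n-1$ per collapsed cycle. Performing this on $k$ of the $(n-2)!$ cycles, for $k=0,1,\ldots,(n-2)!$, gives u-words of lengths $n!+n-1-k(n-1)=n!+(1-k)(n-1)$, which is exactly the claimed list $n!+n-1,\ n!,\ n!-(n-1),\ \ldots,\ n!-(n-1)!+n-1$.

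The main obstacle — and the step that needs the most care — is verifying that after collapsing a chosen set of double-edge cycles, the resulting ``mixed'' graph (ordinary edges plus merged super-edges) still admits an Eulerian-type traversal, and that the traversal can be realized as a genuine word over integers in which (i) every $n$-permutation is covered exactly once, via a linear extension of the induced poset, and (ii) the beginning and end can be handled in the non-cyclic (u-word) setting without the compatibility headache of the cyclic case. For (i) I would invoke the linear-extension argument of \cite{CDG}: the merged factors only introduce incomparabilities between elements at distance exactly $n-1$, and any linear extension of the resulting poset respects the relative order of all comparable elements, so each covered permutation is still recovered uniquely; the only thing to check is that two distinct merged factors never cover the same permutation, which follows because the twins of distinct double edges are distinct permutations (Lemma~\ref{lem1} guarantees one twin pair per cluster, so the $(n-2)!$ collapses touch pairwise disjoint pairs). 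For the Eulerian feasibility, since collapsing a double-edge cycle removes a $2$-regular (in and out) sub-multigraph and replaces it by a $1$-regular one on the same vertex set, in- and out-degrees stay balanced and connectivity is preserved, so an Eulerian circuit still exists; we then read it as a u-word rather than a u-cycle to sidestep the boundary issue, as licensed by Remark~\ref{important-remark}. I would also double-check the arithmetic of the length bookkeeping against the $n=3$ case ($n!+n-1=8$, $n!=6$, and $(n-2)!=1$ so only two values arise, matching $1^{n-1}2$-type compressions being \emph{further} compressions not of this ``distance $n-1$'' family) and the $n=4$ case, where $(n-2)!=2$ yields the four lengths $27,24,21,18$.
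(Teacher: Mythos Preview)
Your approach is essentially the paper's own: collapse any subset of the $(n-2)!$ disjoint double-edge cycles (from Lemmas~\ref{lem1}--\ref{lem4}) by replacing each double edge with a single ``merged'' edge carrying the word $x_1x_2\cdots x_{n-1}x_1$, observe that balance and strong connectivity of the clustered graph are preserved, take an Eulerian cycle, and invoke Remark~\ref{important-remark} to read off a u-word; collapsing $k$ cycles saves $k(n-1)$ letters.

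One slip to fix: your $n=4$ sanity check is off. With $(n-2)!=2$ you have $k\in\{0,1,2\}$, giving the \emph{three} lengths $27,\,24,\,21$; the shortest u-word length in Theorem~\ref{thm1} is $n!-(n-1)!+n-1=24-6+3=21$, not $18$ (the value $18=n!-(n-1)!$ is the conjectured shortest \emph{u-cycle} length in Conjecture~\ref{conj1}, one $(n-1)$ shorter because the cyclic case drops the final $n-1$ letters).
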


\begin{proof} 
It is not hard to show, and is stated in \cite{CDG}, that the clustered graph of overlapping $n$-permutations is balanced and strongly connected for any $n\geq 1$.

There are $(n-1)!$ clusters. By Lemma~\ref{lem4}, there are $(n-2)!=(n-1)!/(n-1)$ disjoint cycles formed by double edges, and we can decide in which cycles to replace every double edge by a single edge thus maintaining the property of the graph (whose nodes are clusters) being balanced. This action will correspond to replacing every double edge of the form

\begin{center}
\begin{tikzpicture}[->,>=stealth',shorten >=1pt,node distance=3.5cm,auto,main node/.style={rectangle,rounded corners,draw,align=center}]
\node[main node] (1) {$x_1x_2\cdots x_{n-1}x^{+}_1$ \\ $x_1x_2\cdots x_{n-1}x^{-}_1$}; 
\node[main node] (2) [right of=1] {$x_2x_3\cdots x_{n-1}x_1x^{+}_2$ \\ $x_2x_3\cdots x_{n-1}x_1x^{-}_2$}; 
\path [draw,transform canvas={shift={(0,0.1)}}] 
(1) edge node {} (2);
\path [draw,transform canvas={shift={(0,-0.1)}}] 
(1) edge node {} (2);
\end{tikzpicture}
\end{center}

by 

\begin{center}
\begin{tikzpicture}[->,>=stealth',shorten >=1pt,node distance=3.5cm,auto,main node/.style={rectangle,rounded corners,draw,align=center}]
\node[main node] (1) {$x_1x_2\cdots x_{n-1}x_1$}; 
\node[main node] (2) [right of=1] {$x_2x_3\cdots x_{n-1}x_1x_2$}; 
\path  
(1) edge node {} (2);
\end{tikzpicture}
\end{center}
and thus introducing incomparable elements inside some of clusters. Strong connectivity in the graph will clearly be maintained as well, since our action is in simply replacing a pair of equivalent edges by a single edge.

So, by removing double edges in such a way we guarantee the existence of an Eulerian cycle going through the clusters, which gives the existence of the respective Hamiltonian cycle (recall that to each word or permutation there corresponds exactly one edge), and thus the existence of a respective u-word for $n$-permutations by Remark~\ref{important-remark}. 
\end{proof}

See Figure~\ref{dist-2-order-3} for an illustration of the proof of Theorem~\ref{thm1} in the case of $n=2$, and Figure~\ref{dist-3-order-4} for that in the case of $n=3$ when both of the double edge cycles were replaced. Examples of u-words that can be obtained from Figures~\ref{dist-2-order-3} and~\ref{dist-3-order-4}, respectively, are 123212 and 123847687657859423123. 

We believe that replacing ``u-words'' by ``u-cylces'' in Theorem~\ref{thm1}, and adjusting the lengths, would result in a true statement. We state this as the following conjecture. 

\begin{conjecture}\label{conj1} Using incomparable elements at distance $n-1$, one can obtain u-cycles for $n$-permutations of lengths $n!$, $n!-(n-1)$, $n!-2(n-1),\ldots,n!-(n-1)!$. \end{conjecture}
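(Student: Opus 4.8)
\textbf{Proof proposal for Conjecture~\ref{conj1} (plan).}

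The plan is to mimic the proof of Theorem~\ref{thm1} as closely as possible, but now produce genuine \emph{cyclic} objects rather than words, which forces us to control the "seam" where the beginning and end of the u-cycle must be compatible. As in Theorem~\ref{thm1}, the starting point is the clustered graph of overlapping $n$-permutations, which is balanced and strongly connected; by Lemmas~\ref{lem1}--\ref{lem4} the double edges partition into $(n-2)!$ disjoint cycles, each of length $n-1$. For $k=0,1,\ldots,(n-2)!$, we pick $k$ of these double-edge cycles and collapse each selected double edge to a single edge, removing $k(n-1)$ edges in total and keeping the cluster graph balanced and strongly connected, hence Eulerian. An Eulerian cycle in the cluster graph lifts to a Hamiltonian cycle in the graph of overlapping permutations (minus the permutations that got merged), and the resulting cyclic sequence of overlapping $n$-permutations has length $n!-k(n-1)$.

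The core of the argument, and the reason Theorem~\ref{thm1} was phrased for u-words only, is the passage from a Hamiltonian cycle in the overlap graph to an actual u-cycle: following \cite{CDG}, one must assign concrete integer values to the letters so that every window of length $n$ reduces to the prescribed permutation (or, at the collapsed edges, to a word with one pair of equal entries that are declared incomparable), \emph{and} so that the window wrapping around the end of the sequence is also correct. First I would recall the \cite{CDG} mechanism: reading off the Hamiltonian cycle gives, for each position, a partial order constraint on the letter there relative to nearby letters, and a linear extension of the union of these constraints yields a valid assignment for a u-\emph{word}; the only obstruction to getting a u-\emph{cycle} is a possible cyclic inconsistency among the $n-1$ overlapping wrap-around windows. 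I would argue that collapsing a double edge does not introduce any new obstruction of this kind: the incomparability we create is exactly between the first and last letter of an affected window, and since those two letters were already adjacent-in-value (differing by $1$) in the twin permutations, declaring them equal is consistent with every poset constraint coming from the rest of the cycle. Concretely, one can first build a classical u-cycle for the uncollapsed Hamiltonian cycle of the same underlying Eulerian structure, then observe that the collapse operation merges two consecutive windows that differ only in whether $x_1^{+}$ or $x_1^{-}$ sits in the last slot, so identifying those two windows into one window ending in a repeat of $x_1$ is achievable by a global relabeling (e.g.\ by rescaling to leave room, then placing the repeated value). This should be carried out one collapsed cycle at a time, checking that the collapses in distinct double-edge cycles act on disjoint sets of windows and therefore do not interfere.

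The main obstacle I anticipate is precisely making the wrap-around linear-extension argument rigorous: showing that the poset of constraints obtained from the (collapsed) Hamiltonian cycle, taken cyclically, is consistent — i.e.\ acyclic as a relation after quotienting by the declared incomparabilities — for \emph{every} choice of which $k$ double-edge cycles to collapse, not just for a convenient one. In \cite{CDG} this cyclic consistency for the classical case is handled by a careful choice of Eulerian cycle or by an ad hoc repair at the seam; I expect we need an analogous lemma here, perhaps of the form "any Eulerian cycle in the (collapsed) cluster graph admits a choice of starting edge making the cyclic poset consistent," or alternatively a direct construction that threads a designated large letter through a fixed position to break all wrap-around ambiguities at once. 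A secondary, more technical point is bookkeeping the exact length: each collapsed double edge removes one permutation/vertex and hence shortens the cyclic word by $n-1$ only after accounting for the $n-1$-fold overlap, so one should double-check that $k$ collapses in a single double-edge cycle (there are exactly $n-1$ edges there) behave additively and that collapsing an entire double-edge cycle is still allowed (it keeps balance, since we remove a whole directed cycle's worth of parallel copies). If the cyclic-consistency lemma can be established, the length formula $n!-k(n-1)$ for $k=0,\ldots,(n-2)!$ follows immediately, proving the conjecture.
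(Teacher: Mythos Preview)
This statement is labelled a \emph{conjecture} in the paper and is left open there; the paper has no proof of it. Immediately after stating it, the authors write that a proof ``would follow the same steps as those in the proof of Theorem~\ref{thm1}, but an argument would then need to be made that there is a Hamiltonian cycle in the graph of overlapping permutations that can be turned into a u-cycle for permutations.'' Your plan is exactly this: reproduce Theorem~\ref{thm1} and then try to close the wrap-around seam. So there is nothing to compare against on the paper's side --- you and the authors are at the same point.

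As a plan, your outline is sound up to the step you yourself flag as the obstacle. The genuine gap is precisely the cyclic-consistency lemma you describe: you do not prove that the poset of order constraints coming from the (collapsed) Hamiltonian cycle, read cyclically, is acyclic, nor that some choice of Eulerian cycle or starting edge makes it so. Your heuristic that ``collapsing a double edge does not introduce any new obstruction'' is plausible but not an argument: even in the classical (uncollapsed) case it is not known that \emph{every} Hamiltonian cycle yields a u-cycle, only that some do, and your reduction ``first build a classical u-cycle for the uncollapsed Hamiltonian cycle of the same underlying Eulerian structure'' presupposes you can pick an Eulerian cycle that simultaneously (a) lifts to a realizable u-cycle and (b) respects the collapses you have already committed to. That compatibility is the whole problem. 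Until that lemma is supplied, what you have is a restatement of why Conjecture~\ref{conj1} is believed, not a proof.

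One minor correction on bookkeeping: collapsing a single double edge merges two edges into one and therefore shortens the cyclic word by exactly $1$ (one fewer window), not by $n-1$; it is collapsing an entire double-edge cycle --- which contains $n-1$ double edges by Lemma~\ref{lem4} --- that shortens by $n-1$. Your final formula $n!-k(n-1)$ for $k$ collapsed cycles is correct, but the sentence ``each collapsed double edge \ldots\ shortens the cyclic word by $n-1$'' is not.
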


An attempt to solve Conjecture~\ref{conj1} would follow the same steps as those in the proof of Theorem~\ref{thm1}, but an argument would then need to be made that there is a Hamiltonian cycle in the graph of overlapping permutations that can be turned into a u-cycle for permutations. There is a chance that {\em any} Hamiltonian cycle has this property. Examples of u-cycles supporting Conjecture~\ref{conj1}, that can be obtained from Figures~\ref{dist-2-order-3} and~\ref{dist-3-order-4}, respectively, are 1232 and 123847687657859423. 

\begin{figure}[h]
\begin{center}
\comm{
\begin{tikzpicture}[->,>=stealth',shorten >=1pt,node distance=2.5cm,auto,main node/.style={rectangle,rounded corners,draw,align=center}]


\node[main node] (1) {1234 \\ 1243 \\ 1231}; 
\node (7) [right of=1,node distance=1.1cm] {``123''};
\node[main node] (2) [below left of=1] {1324 \\ 1423 \\ 1321};
\node (10) [left of=2,node distance=1.1cm] {``132''};
\node[main node] (3) [below right of=1] {3123 \\ 4132 \\ 4231};
\node (8) [right of=3,node distance=1.1cm] {``312''};
\node[main node] (4) [below of=2] {2314 \\ 2312 \\ 3421};
\node (11) [left of=4,node distance=1.1cm] {``231''};
\node[main node] (5) [below of=3] {2134 \\ 2132 \\ 3241};
\node (9) [right of=5,node distance=1.1cm] {``213''};
\node[main node] (6) [below left of=5] {4321 \\ 4312 \\ 3213};
\node (12) [left of=6,node distance=1.1cm] {``321''};

\path
(1) edge node {} (2)
     edge node {} (4);


\path [draw,transform canvas={shift={(0,0.2)}}] 
(2) edge node {} (3)
     edge node {} (5);

\path     
(2) edge node {} (6);

\path
(3) edge node {} (1)
     edge node {} (2)
     edge node {} (4);

\path [draw,transform canvas={shift={(0,-0.2)}}] 
(4) edge node {} (3)
     edge node {} (5)
     edge node {} (6);

\path [draw,transform canvas={shift={(0,-0.4)}}] 
(5) edge node {} (4);

\path
(5) edge node {} (1)
     edge node {} (2);

\path
(6) edge node {} (5)
     edge node {} (3);
     
\path
(1) edge [loop above] node {} (1);
\path
(6) edge [loop below] node {} (6);

\end{tikzpicture}}
\end{center}
\caption{Applying incomparable elements at distance 3 for 4-permutations}\label{dist-3-order-4}
\end{figure}
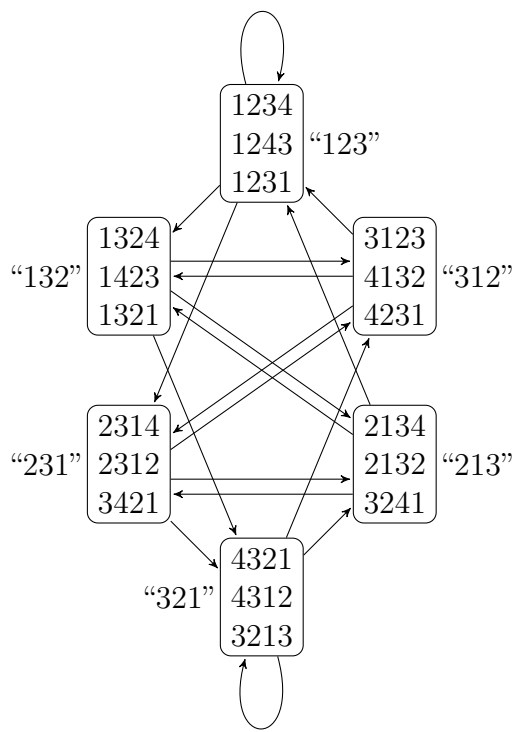

\subsection{Lengths of u-cycle and u-words for $n$-per-mutations different from $n$ and those in Theorem~\ref{thm1} and Conjecture~\ref{conj1}}\label{possible-lengths-u-cycles-sec}

Theorem~\ref{thm1} (resp., Conjecture~\ref{conj1}) discusses a number of (resp., potentially) possible lengths for u-words (resp., u-cycles)  for $n$-permutations, the shortest of which is $n!-(n-1)!+n-1$ (resp., $n!-(n-1)!$). The trivial u-cycle $1^{n-1}2$ and the trivial u-word $1^n$ for $n$-permutations are of length $n$. A natural question is whether there exist  u-words and u-cycles for $n$-permutations of length larger than $n$ but smaller than $n!-(n-1)!+n-1$ and $n!-(n-1)!$, respectively. Clearly, such u-cycles/u-words could only be obtained if incomparable elements would be allowed on the distance smaller than $n-1$. An example of such a u-cycle is $u=34321432345234$ of length 14 for 4-permutations (note that 4!-3!=18 is the shortest u-cycle suggested by Conjecture~\ref{conj1} and actually given in Section~\ref{main-gen-res-sec}). Clustering the graph of overlapping 4-permutations with incomparable elements used is shown in Figure~\ref{dist-2-order-4}. A u-word of length 17 ($<21$, the shortest length in Theorem~\ref{thm1}) is easily obtained  from $u$ by adjoining 343 at the end: 34321432345234343.

\begin{figure}[h]
\begin{center}
\comm{
\begin{tikzpicture}[->,>=stealth',shorten >=1pt,node distance=2cm,auto,main node/.style={rectangle,rounded corners,draw,align=center}]

\node[main node] (1) {1234 \\ 2341 \\ 1232}; 
\node (7) [right of=1,node distance=1.1cm,transform canvas={shift={(0,0.5)}}] {``123''};

\node[main node] (2) [below left of=1] {4123};
\node (10) [left of=2,node distance=1.1cm] {``312''};
\node[main node] (3) [below right of=1] {3412};
\node (8) [right of=3,node distance=1.1cm] {``231''};

\node[main node] (13) [below left of=2] {1212 \\ 2321};
\node (15) [left of=13,node distance=1.1cm] {``121''};
\node[main node] (14) [below right of=3] {2121 \\ 2123};
\node (16) [right of=14,node distance=1.1cm] {``212''};

\node[main node] (4) [below right of=13] {2143};
\node (11) [left of=4,node distance=1cm] {``213''};
\node[main node] (5) [below left of=14] {1432};
\node (9) [right of=5,node distance=1cm] {``132''};

\node[main node] (6) [below left of=5] {4321 \\ 3214 \\ 3212};
\node (12) [left of=6,node distance=1.1cm,transform canvas={shift={(0,-0.4)}}] {``321''};

\path
(1) edge node {} (3);
\path
(2) edge node {} (1);
\path
(3) edge node {} (2);

\path
(6) edge node {} (4);
\path
(4) edge node {} (5);
\path
(5) edge node {} (6);

\path [draw,transform canvas={shift={(0,0.2)}}] 
(13) edge node {} (14);

\path [draw,transform canvas={shift={(0,-0.2)}}] 
(14) edge node {} (13);

\path
(1) edge [bend right=60] node {} (13);
\path
(13) edge [bend right=60] node {} (6);
\path
(6) edge [bend right=60] node {} (14);
\path
(14) edge [bend right=60] node {} (1);
     
\path
(1) edge [loop above] node {} (1);
\path
(6) edge [loop below] node {} (6);

\end{tikzpicture}}
\end{center}
\caption{Clustering the graph of overlapping 4-permutations that corresponds to the u-cycle 34321432345234}\label{dist-2-order-4}
\end{figure}

\section{Shortening u-cycles and u-words for permutations via usage of $\Diamond$s}\label{sec3}

In this section we consider the shortening problem via usage of $\Diamond$s. While the usage of the plain symbol $\Diamond$ seems to be dominated by various non-existence results (see Section~\ref{usage-diamonds}), the usage of $\Diamond_D$ may potentially result in interesting classification theorems, an example of which is given in Section~\ref{usage-diamond-a-b} (see Theorem~\ref{restr-Diamond}).

\subsection{Usage of $\Diamond$s}\label{usage-diamonds}

The following lemma is an analogue in the case of permutations of Theorem 4.1 in \cite{Goeckner-et-al} and Lemma 14 in \cite{CKMS} obtained for words. 

\begin{lemma}\label{structural}  Let $n\geq 3$ and $u = u_1u_2\cdots u_N$ be a u-p-cycle, or u-p-word, for $n$-permutations. If $u_k = \Diamond$ then $u_{k+n} =u_{k-n} =\Diamond$ assuming $k+n$ and/or $k-n$ exist in the case of u-p-words, and taking these numbers modulo $N$ in the case of u-p-cycles. \end{lemma}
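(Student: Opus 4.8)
The plan is to argue by contradiction, showing that a $\Diamond$ at position $k$ forces a $\Diamond$ at position $k+n$ (the case $k-n$ being symmetric, or following by reversing the roles). Suppose $u_k=\Diamond$ but $u_{k+n}=a$ for some honest (non-$\Diamond$) symbol $a$; we may further assume, by Section~\ref{definitions}, that the notion of a $\Diamond_D$ well-definedness constraint means no two $\Diamond$s (or restricted $\Diamond$s with non-disjoint domains) sit within the same length-$n$ factor, so in fact $u_{k+1},\dots,u_{k+n-1}$ are all honest symbols. The key object is the family of length-$n$ factors $F_j=u_ju_{j+1}\cdots u_{j+n-1}$ for $j=k-n+1,\dots,k+1$ — these are precisely the length-$n$ windows that either contain position $k$ or are adjacent to it. I would compare $F_k=u_ku_{k+1}\cdots u_{k+n-1}$ (which contains the $\Diamond$ at its first position) with $F_{k+1}=u_{k+1}\cdots u_{k+n}$ (which contains the honest symbol at position $k+n$ in its last slot).

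First I would recall the counting fact already stated in the excerpt: a length-$n$ factor containing exactly one $\Diamond$ covers exactly $n$ permutations, while a length-$n$ factor of honest symbols covers exactly one permutation. So $F_k$ covers $n$ permutations $\pi^{(1)},\dots,\pi^{(n)}$, obtained by substituting each of the $n$ possible ``ranks'' into the first slot while keeping $\red$ of the honest part $u_{k+1}\cdots u_{k+n-1}$ fixed — these $n$ permutations share a common pattern on positions $2,\dots,n$, namely $\red(u_{k+1}\cdots u_{k+n-1})$, call it $\sigma$ (an $(n-1)$-pattern). Now for the u-p-cycle/word $u$ to be universal, every $n$-permutation whose pattern on its \emph{first} $n-1$ positions equals $\sigma$ must also be covered somewhere — there are exactly $n$ such permutations, and they are all covered by the \emph{single} factor $F_{k+1}$ only if $F_{k+1}$ also contains a $\Diamond$ in its first slot, i.e.\ $u_{k+1}=\Diamond$. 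But we argued $u_{k+1}$ is honest, so $F_{k+1}$ covers just one permutation. The remaining $n-1$ permutations with first-$(n-1)$-pattern $\sigma$ must be covered by other factors of $u$; I would then derive a contradiction by a careful accounting of overlaps, mirroring the proof of Lemma~14 in \cite{CKMS} and Theorem~4.1 in \cite{Goeckner-et-al}.

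The cleanest way to push this through, I think, is the following overlap argument. Consider the set $S$ of all length-$n$ factors of $u$ whose honest symbols on positions $2,\dots,n$ reduce to $\sigma$; equivalently, factors $F$ with $\red$ of the last $n-1$ entries equal to $\sigma$. The factor $F_k$ is one such (its first entry is the $\Diamond$), and it covers $n$ permutations by itself. If there were another factor $F_{j}\in S$ with $j\neq k$, then $F_j$ would cover some permutation already covered by $F_k$ unless the first entry of $F_j$ forced a genuinely new permutation — but there are only $n$ permutations with the fixed tail-pattern $\sigma$, and $F_k$ already accounts for all of them. Hence $F_k$ is the \emph{only} factor whose last-$(n-1)$ honest entries reduce to $\sigma$. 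Symmetrically, looking one step to the right, $F_{k+1}$ has honest entries on \emph{all} positions, and its first $n-1$ entries reduce to some pattern $\tau$; the permutation it covers has first-$(n-1)$-pattern $\tau$. By a parallel argument (using the $\Diamond$ at position $k$ appearing as the \emph{last} slot of the factor $F_{k-n+1}=u_{k-n+1}\cdots u_k$), I would show that $F_{k-n+1}$ is the unique factor whose first $n-1$ honest entries reduce to its relevant pattern and covers $n$ permutations with a common first-$(n-1)$-pattern. The contradiction arises from a de~Bruijn-graph-style balance/flow argument: the ``transition'' from pattern $\sigma$ to pattern $\sigma$-shifted is realized with multiplicity $n$ by the window through position $k$, but on the honest side every transition has multiplicity $1$, and the cyclic (resp.\ path) structure of $u$ cannot reconcile these multiplicities unless the neighbour at $k+n$ (resp.\ $k-n$) is also a $\Diamond$.

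I expect the main obstacle to be making the overlap/uniqueness bookkeeping rigorous rather than hand-wavy — specifically, pinning down exactly which permutations are ``double-covered'' when two distinct length-$n$ windows share the same reduced tail (or head), and ruling out the degenerate small cases where a factor could wrap around and reuse position $k$ (relevant only in the cyclic case, where one must be careful when $N$ is close to $2n$). The hypothesis $n\geq 3$ should enter precisely here: for $n=2$ the window $\Diamond 1$ is a legitimate u-p-word and the forcing fails, so any clean argument must use $n\geq 3$ to guarantee there is ``enough room'' between position $k$ and position $k+n$ for the honest symbols to pin down a unique pattern. I would structure the write-up to isolate the combinatorial core as: ``the reduced tail of a $\Diamond$-containing window determines that window uniquely among all windows of $u$,'' prove that lemma-let first, and then let the balance argument fall out.
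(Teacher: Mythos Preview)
Your proposal contains a genuine error and an incomplete step.

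\textbf{The error.} You assume that $u_{k+1},\dots,u_{k+n-1}$ are all honest symbols, justifying this by the well-definedness constraint on $\Diamond_D$. But this lemma is about plain $\Diamond$s, not restricted ones, and nothing prevents several $\Diamond$s from sitting in the same length-$n$ window; indeed the paper later works with arbitrary diamondicity $d\ge 1$. The paper handles this correctly by letting $\pi$ be \emph{one particular} $(n-1)$-permutation obtained from $u_{k+1}\cdots u_{k+n-1}$ after substituting any remaining $\Diamond$s there. All subsequent reasoning is then about that fixed $\pi$. Your counting (``$F_k$ covers exactly $n$ permutations, $F_{k+1}$ covers exactly one'') breaks down as stated once extra $\Diamond$s are allowed, though it can be repaired by the same device of freezing a substitution.

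\textbf{The incomplete step.} Your ``balance/flow'' paragraph gestures at the right phenomenon but does not actually produce a contradiction. The paper's route is much more direct and you already have the key lemma-let in hand: because the $\Diamond$ at position $k$ makes $F_k$ cover every $n$-permutation whose last $n-1$ entries reduce to $\pi$, the block $u_{k+1}\cdots u_{k+n-1}$ is (in the cyclic case) the \emph{only} place in $u$ where a length-$(n-1)$ factor reduces to $\pi$. Hence $F_{k+1}$ is the only window whose first $n-1$ entries reduce to $\pi$, and since $u_{k+n}$ is honest it covers just one permutation beginning with $\pi$. Now simply exhibit an uncovered permutation: among $\red(\pi 0)$, $\red(\pi 1^+)$, $\pi n$ (three distinct $n$-permutations beginning with $\pi$, using $n\ge 3$), at most one is covered by $F_{k+1}$ and at most one (the monotone one, if $\pi$ is monotone) can coincide with a permutation also ending in $\pi$ and hence already covered by $F_k$. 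So at least one is missed. In the non-cyclic case $\pi$ may appear once more, at the very start of $u$, which is why the paper keeps three candidates rather than two. This is where $n\ge 3$ actually enters---not as a ``room'' condition between $k$ and $k+n$, but to guarantee enough distinct extensions of $\pi$ to outrun the at-most-two covered ones.

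In short: drop the single-$\Diamond$-per-window assumption, fix a substitution to get a concrete $\pi$, and replace the flow argument by directly naming an uncovered permutation beginning with $\pi$.
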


\begin{proof} 

In what follows, the indices are taken modulo $N$ in the circular case. Suppose that $u_k = \Diamond$ and $u_{k+n}\neq \Diamond$. Further, suppose that $\pi=\pi_1\cdots \pi_{n-1}$ is 
one of the permutations obtained from $u_{k+1}\cdots u_{k+n-1}$ by substituting all the $\Diamond$s, if any, by any values and taking the reduced form. 

For the circular case, because $u_k=\Diamond$, the permutation $\pi$ cannot be covered by any other factor of $u$ (or else, some permutation ending with $\pi$ in the reduced form would be covered twice). However, this means that if $\pi$ is not monotone, at least one of the $n$-permutations $\red(\pi0)$ or $\pi n$ is not covered by $u$; contradiction. On the other hand, if $\pi$ is monotone, then we use the fact that $n\geq 3$, so even though both $\red(\pi0)$ and $\pi n$ can be covered by $u$, there is still at least one $n$-permutation not covered by $u$; contradiction.

For the non-circular case, there is a possibility for $\pi$ to occur one more time in $u$, namely, at its very beginning (that is, it is possible that $\red(u_1u_2\cdots u_{n-1})=\pi$). However, since $n\geq 3$, we know that  at least one of the $n$-permutations $\red(\pi0)$, 
$\red(\pi1^+)$ or $\pi n$ is not covered by $u$; contradiction.

One can use similar arguments, or use the fact that the reverse of a u-p-cycle/u-p-word is a u-p-cycle/u-p-word, to show that $u_{k-n} = \Diamond$.
\end{proof}

By the previous lemma, for any $\Diamond$ in a u-p-cycle or u-p-word $u$, the other two symbols in distance $n$ from it must be $\Diamond$s as well.
Thus the positions of $\Diamond$s are periodic in $u$ with period $n$, and any factor of $u$ of length $n$ contains equal number of
$\Diamond$s.
It follows that the notion of the {\em diamondicity} introduced next is well defined (see also \cite{Goeckner-et-al} where this notion
was introduced in the context of u-p-words over non-binary alphabets).
\begin{definition}
For a u-p-cycle or u-p-word $u$ for $n$-permutations, the diamondicity of $u$ is the number of $\Diamond$s
in any length $n$ factor in $u$.
\end{definition}

\subsubsection{U-p-cycles for permutations with $\Diamond$(s)}

Lemma~\ref{structural} yields Corollary~\ref{cor-u-cycle} below, which captures various rather restrictive conditions on relations between $n$ and $N$ to 
be satisfied by any u-p-cycle for permutations. In the proof of Corollary~\ref{cor-u-cycle}, we need the following simple number theoretical fact as well as the well known result stated in Lemma~\ref{FWPL}.

\begin{lemma}\label{number_t}
If $n$ and $N$ are two positive integers, $c=\gcd(n,N)$, and $I=\left\{0,1,\ldots,\frac{N}{c}-1\right\}$, then 
$$
\left\lbrace i\cdot \frac{n}{c}\mod \frac{N}{c}\,:\,i\in I\right\rbrace=I.
$$
\end{lemma}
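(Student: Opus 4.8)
The plan is to recognize Lemma~\ref{number_t} as the standard fact that multiplication by a unit permutes $\mathbb{Z}/m\mathbb{Z}$. First I would set $m=\frac{N}{c}$ and $a=\frac{n}{c}$, and recall that since $c=\gcd(n,N)$, we have $\gcd(a,m)=1$. The claim is then exactly that the map $\varphi\colon I\to I$ defined by $\varphi(i)=i\cdot a \bmod m$ is a bijection of the set $I=\{0,1,\ldots,m-1\}$ onto itself.

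The key steps, in order: (1) Note that $\varphi$ is a well-defined map from $I$ to $I$, since reduction modulo $m$ always lands in $\{0,1,\ldots,m-1\}$. (2) Since $I$ is finite, it suffices to show $\varphi$ is injective. Suppose $i\cdot a \equiv j\cdot a \pmod{m}$ for $i,j\in I$; then $m\mid a(i-j)$, and because $\gcd(a,m)=1$ we get $m\mid (i-j)$. As $|i-j|<m$, this forces $i=j$. (3) Conclude that $\varphi$ is an injection of a finite set into itself, hence a bijection, so its image is all of $I$, which is the desired equality. Alternatively, one could invoke that $a$ has a multiplicative inverse modulo $m$ (again by $\gcd(a,m)=1$) to exhibit the inverse map explicitly, but the injectivity argument is cleaner and avoids citing Bézout.

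There is essentially no obstacle here; the only point requiring any care is the justification that $\gcd\!\left(\frac{n}{c},\frac{N}{c}\right)=1$ when $c=\gcd(n,N)$, which is a routine property of the gcd (dividing out the common factor leaves coprime quotients). I would state this as a one-line observation rather than prove it. Thus the whole proof amounts to: divide through by $c$, observe coprimality, and apply the pigeonhole principle to the injective self-map of a finite set.

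\begin{proof}
Write $m=\frac{N}{c}$ and $a=\frac{n}{c}$. Since $c=\gcd(n,N)$, the integers $a$ and $m$ are coprime. Consider the map $\varphi\colon I\to I$ given by $\varphi(i)=i\cdot a\bmod m$; this is well defined because reduction modulo $m$ yields an element of $\{0,1,\ldots,m-1\}=I$. If $\varphi(i)=\varphi(j)$ for some $i,j\in I$, then $m\mid a(i-j)$, and since $\gcd(a,m)=1$ we obtain $m\mid (i-j)$; as $|i-j|<m$, this forces $i=j$. Hence $\varphi$ is an injection from the finite set $I$ to itself, and therefore a bijection. Consequently its image equals $I$, that is,
$$
\left\lbrace i\cdot \frac{n}{c}\bmod \frac{N}{c}\,:\,i\in I\right\rbrace=I,
$$
as claimed.
\end{proof}
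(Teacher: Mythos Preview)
Your proof is correct and follows essentially the same approach as the paper: both arguments show that the map $i\mapsto i\cdot\frac{n}{c}\bmod\frac{N}{c}$ is injective on $I$ and then conclude by equal cardinality. The only cosmetic difference is that you invoke $\gcd\!\left(\frac{n}{c},\frac{N}{c}\right)=1$ directly, whereas the paper phrases the same step via $\mathrm{lcm}(n,N)=\frac{N}{c}\cdot n$; these are equivalent elementary facts.
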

\begin{proof}
We show that the integers of the form  $i\cdot \frac{n}{c}\mod \frac{N}{c}$, $i\in I$, are all different.
If $i,i'\in I$ with $i\neq i'$, then 
$i\cdot \frac{n}{c}\mod \frac{N}{c}\neq i'\cdot \frac{n}{c}\mod \frac{N}{c}$.
Indeed, otherwise  $(i-i')\cdot \frac{n}{c}$ is a multiple of $\frac{N}{c}$, or equivalently 
$(i-i')\cdot n$ is a common multiple of $n$ and $N$, which yields a contradiction
since $|i-i'|<\frac{N}{c}$ and $\mathrm{lcm}(n,N)=\frac{N}{c}\cdot n$.
Thus, the sets in question have the same cardinality, which completes the proof.
\end{proof}

\begin{lemma}[Fine and Wilf's periodicity lemma, \cite{FW}]\label{FWPL} Any word having periodicities $p$ and $q$ and length $\geq p+q-\gcd(p,q)$ has periodicity $\gcd(p, q)$. \end{lemma}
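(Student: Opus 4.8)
The plan is to prove this classical periodicity lemma by induction on $p+q$ along the Euclidean algorithm; an essentially equivalent route is a direct B\'ezout/graph argument, which I will also sketch since it makes the role of the bound transparent.

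Write $m=|w|$ and $d=\gcd(p,q)$. If $p=q$ there is nothing to prove, since then $d=p$, so assume without loss of generality $p>q\ge 1$. Since $d\mid p-q$ we have $\gcd(p-q,q)=d$ and $d\le p-q$, i.e. $p\ge q+d$; moreover $m\ge p+q-d>(p-q)+q-d$, so the pair of periods $(p-q,q)$ again satisfies the hypothesis of the lemma. Hence, once we show that $w$ has period $p-q$, the induction hypothesis applied to the periods $p-q$ and $q$ yields that $w$ has period $\gcd(p-q,q)=d$, as desired. The base of the induction is the case $p=q$ just handled.

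So the core step is to prove that \emph{$w$ has period $p-q$}, i.e. $w_i=w_{i+p-q}$ for all $1\le i\le m-(p-q)$. For indices with $i\le m-p$ this is immediate: $w_i=w_{i+p}$ by periodicity $p$, and $w_{i+p-q}=w_{i+p}$ by periodicity $q$, both applications being in range. The remaining indices $m-p<i\le m-p+q$ are the delicate ones; here I would argue by translating indices down by multiples of $q$ into the range $\{1,\dots,m-p\}$ and then applying periodicity $p$ --- concretely, comparing $w_i$ and $w_{i+p-q}$ through a common value $w_k$ with $k\equiv i\pmod q$ and $1\le k\le m-p$. The existence of such a $k$ inside $\{1,\dots,m\}$ is exactly where the bound $m\ge p+q-d$, together with the consequence $p\ge q+d$, is used, and where the statement is tight.

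The main obstacle is precisely this boundary bookkeeping for the last $q$ indices: one must check that the chain of applications of periodicities $p$ and $q$ never leaves $\{1,\dots,m\}$, and the available slack is exactly $\gcd(p,q)$. An alternative packaging that exhibits this cleanly is to form the graph $G$ on vertex set $\{1,\dots,m\}$ with an edge between $i$ and $j$ whenever $|i-j|\in\{p,q\}$: because $w$ has periods $p$ and $q$, $w$ is constant on each connected component of $G$, so it suffices to show that, under $m\ge p+q-d$, the components of $G$ are exactly the residue classes modulo $d$. Reducing modulo $d$ (rescaling a residue class by $d$) leaves the coprime case $\gcd(p,q)=1$, $m\ge p+q-1$, and the counting $\lfloor m/d\rfloor\ge p/d+q/d-1$ guarantees each class is large enough. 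In the coprime case one shows $G$ is connected --- for instance by verifying that the greedy walk ``add $p$ if the result is still $\le m$, otherwise subtract $q$'' stays inside $\{1,\dots,m\}$ and visits every vertex before repeating; once again the range check consumes exactly the slack $p+q-1$, which is why the hypothesis cannot be weakened.
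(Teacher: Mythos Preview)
The paper does not give its own proof of this lemma: it is stated as the classical Fine--Wilf periodicity lemma with a citation to \cite{FW}, and is then invoked (alongside an alternative direct argument) in the proof of Corollary~\ref{cor-u-cycle}. So there is no ``paper's proof'' to compare your proposal against.

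As for your outline itself, the Euclidean induction on $p+q$ via showing period $p-q$ is a standard and correct route to Fine--Wilf, and your reduction to the coprime case and the graph/connectivity reformulation are both legitimate. You are right that the whole difficulty sits in the boundary range $m-p<i\le m-p+q$, and you explicitly flag this rather than carry it out; note in particular that your suggested ``greedy walk'' (add $p$ if possible, else subtract $q$) needs a small patch at the extreme $m=p+q-1$, $x=q$, where neither move stays in $\{1,\dots,m\}$ as written, so the connectivity argument has to be phrased a bit more carefully (e.g.\ by showing each vertex in $\{1,\dots,q\}$ is connected to some vertex in $\{1,\dots,q\}\setminus\{q\}$ via a $+p$ then repeated $-q$ chain, or by a direct counting of edges). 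None of this is a real obstruction---it is exactly the ``boundary bookkeeping'' you mention---but a complete write-up would need to close that case.
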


\begin{corollary}\label{cor-u-cycle} Let $u = u_1u_2\cdots u_N$ be a u-p-cycle (with or without $\Diamond$(s)) for $n$-permutations. Then we have 
\begin{itemize}
\item[(i)] $N =k!$, where $n-k$ is the diamondicity  of $u$.
\end{itemize}

\noindent
In addition, if $c=\gcd(n,N)$, then 
\begin{itemize}
  \item[(ii)] the occurrences of $\Diamond$s in $u$ are $c$-periodic, and
  \item[(iii)] $\frac{n}{c}$ divides $n-k$, so $c\neq 1$ for $1\leq k\leq n-1$. 
\end{itemize}

\end{corollary}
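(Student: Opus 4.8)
The plan is to leverage Lemma~\ref{structural}, which forces the $\Diamond$-positions to be periodic with period $n$ inside any length-$n$ window, together with a counting argument for part~(i), and then refine the periodicity using number theory and the Fine--Wilf lemma for parts~(ii) and~(iii).

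For part~(i), I would argue as follows. Let the diamondicity be $d = n-k$, so every length-$n$ factor of $u$ contains exactly $d$ $\Diamond$s and $k$ deterministic symbols. Each length-$n$ factor of $u$ with $d$ $\Diamond$s covers exactly $\frac{n!}{(n-d)!} = \frac{n!}{k!}$ permutations (this count is recorded in Subsection~\ref{wild-subsec}). Since $u$ is a u-p-cycle, there are exactly $N$ such factors (one starting at each position, read cyclically), and together they must cover all $n!$ permutations, each exactly once. Hence $N \cdot \frac{n!}{k!} = n!$, i.e.\ $N = k!$. (One must check that distinct factors cover disjoint permutation sets; this is immediate from the u-p-cycle definition, since "each permutation is order-isomorphic to exactly one factor via one choice of $\Diamond$-substitution" is precisely what being a u-p-cycle asserts.)

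For part~(ii), recall from Lemma~\ref{structural} that $u_k = \Diamond$ implies $u_{k\pm n} = \Diamond$ (indices mod $N$). Starting from any one $\Diamond$ at position $j$, the positions $\{\,j + i n \bmod N : i \in \mathbb{Z}\,\}$ are all $\Diamond$s. Writing $c = \gcd(n,N)$ and applying Lemma~\ref{number_t} with the roles of $n$ and $N$ (after dividing through by $c$), the set $\{\,j + i n \bmod N\,\}$ equals $\{\,j + c\ell \bmod N : \ell = 0,1,\ldots,\frac{N}{c}-1\,\}$, i.e.\ the $\Diamond$s form a union of residue classes mod $c$; in particular their occurrences are $c$-periodic. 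Equivalently: the indicator sequence of $\Diamond$-positions has period $n$ (by Lemma~\ref{structural}) and period $N$ (trivially, being length $N$ read cyclically), so by the Fine--Wilf periodicity lemma (Lemma~\ref{FWPL}) — the cyclic word has length $N \geq n = \ldots$; here one should phrase it on the length-$N$ cyclic string, for which period $n$ and period $N$ already give period $\gcd(n,N) = c$ directly, or unwind it into a long enough linear word — the indicator sequence has period $c$. For part~(iii), within one period block of length $c$, the deterministic/$\Diamond$ pattern repeats; in any window of length $n$ there are $d = n-k$ $\Diamond$s, and a window of length $n = \frac{n}{c}\cdot c$ spans exactly $\frac{n}{c}$ full period blocks, so $d$ equals $\frac{n}{c}$ times (the number of $\Diamond$s per block). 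Hence $\frac{n}{c} \mid d = n-k$. If $c = 1$ then $n \mid n-k$, forcing $k = 0$ or $k \geq n$; so for $1 \leq k \leq n-1$ we must have $c \neq 1$.

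The main obstacle I anticipate is the precise bookkeeping in part~(ii)/(iii): one has to be careful about whether Fine--Wilf applies to a \emph{cyclic} word of length $N$ (where "period $p$" should mean $p \mid N$ after the gcd reduction, so the argument is really Lemma~\ref{number_t} rather than Fine--Wilf), versus applying it to an unrolled linear word of length $\geq n + N - c$, which needs $N$ not too small; the safe route is to use Lemma~\ref{number_t} to pin down the $\Diamond$-orbit exactly, and invoke Lemma~\ref{FWPL} only if a cleaner divisibility statement is wanted. A secondary subtlety is ensuring that "every length-$n$ window spans an integer number of period-$c$ blocks" — this is exactly where $c \mid n$ (true since $c = \gcd(n,N)$) is used, and it is what makes the count $\frac{n}{c} \mid (n-k)$ go through.
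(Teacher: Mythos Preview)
Your proposal is correct and follows essentially the same approach as the paper: the counting argument for (i) is identical, and for (ii) the paper likewise invokes Lemma~\ref{structural} together with Fine--Wilf (Lemma~\ref{FWPL}), while also giving an alternative block-counting proof; your orbit argument via Lemma~\ref{number_t} is in fact the cleaner of the two routes and sidesteps exactly the cyclic-vs-linear Fine--Wilf subtlety you flag. Part (iii) is then deduced in both cases from the fact that a length-$n$ window is a union of $n/c$ full period-$c$ blocks.
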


\begin{proof} {\it (i)}
The number of $\Diamond$s in each factor of $u$ of length $n$ is $n-k$, and thus such a factor covers $${n\choose k}(n-k)!=\frac{n!}{k!}$$ permutations of
length $n$, and there must be $k!$ length $n$ factors (read cyclically) to cover all $n!$ permutations. 

\noindent
{\it (ii)} The statement follows by Lemma~\ref{structural} and Lemma~\ref{FWPL} applied to $n$ and $N$. However, we provide an alternative proof here.

Factoring $u$ as $\underbrace{u_1u_2\cdots u_c}_{v_1}\ \underbrace{u_{c+1}u_{c+2}\cdots u_{2c}}_{v_2}\ \cdots\ 
\underbrace{u_{N-c+1}u_{N-c+2}\cdots u_{N}}_{v_\frac{N}{c}}$, we have $u = v_1v_2\cdots v_\frac{N}{c}$ where  $v_i$, $1\leq i\leq \frac{N}{c}$,
is the length $c$ factor $u_{c\cdot(i-1)+1} u_{c\cdot(i-1)+2}\cdots u_{c\cdot i}$. With this notation, it follows that the number of $\Diamond$s in 
$v_1$ is the same as that in $v_{\frac{n}{c}+1}$. Indeed,
$v_1v_2\cdots v_{\frac{n}{c}}$ and $v_2v_3\cdots v_{\frac{n}{c}+1}$ are two length $n$ 
factors of $u$ which overlap when $c\neq n$,
and by Lemma~\ref{structural} they have the same number of $\Diamond$s, and so do 
$v_1$ and $v_{\frac{n}{c}+1}$.
Similarly, and taking the indices modulo $\frac{N}{c}$, the length $c$ factors $v_{\frac{n}{c}+1}$ and $v_{\frac{2n}{c}+1}$
have the same number of $\Diamond$s. And generally, each of the length $c$ factors 
$v_{\frac{i\cdot n}{c}+1}$, $0\leq i<\frac{N}{c}$, has the same number of  $\Diamond$s.
By Lemma \ref{number_t}, the set $\left\{i\cdot \frac{n}{c}+1\,:\,0\leq i<\frac{N}{c}\right\}$ is precisely
the set $\left\{i\,:\,1\leq i\leq\frac{N}{c}\right\}$, and thus each of the length $c$ factors $v_i$, $1\leq i\leq\frac{N}{c}$, 
has the same number of  $\Diamond$s.

Clearly, $u_2u_3\cdots u_Nu_1$ is a u-p-cycle for $n$-permutations too, and factoring it as
$\underbrace{u_2u_3\cdots u_{c+1}}_{v'_1}\ \underbrace{u_{c+2}u_{c+3}\cdots u_{2c+1}}_{v'_2}\ \cdots\ 
\underbrace{u_{N-c+2}u_{N-c+3}\cdots u_{1}}_{v'_\frac{N}{c}}$, and reasoning as previously, we have that 
each $v'_i$ has the same number of $\Diamond$s (which is the same as that of $v_i$s).
Repeating this process, we have finally that each length $c$ factor of $u$ has the same number of $\Diamond$s.

\noindent
{\it (iii)}
By (ii) it follows that 
$\frac{n}{c}$ divides the number of $\Diamond$s in each factor of length~$n$.
\end{proof}

The following corollary of Corollary \ref{cor-u-cycle} refines Lemma~\ref{structural} in the case of u-p-cycles for permutations. 

\begin{corollary}
With the notations in Corollary~\ref{cor-u-cycle}, if $u$ is a u-p-cycle for $n$-permutations,
 the positions of $\Diamond$s in $u$ are periodic with period~$c$.
\end{corollary}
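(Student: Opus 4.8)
The plan is to deduce the $c$-periodicity of the $\Diamond$-positions directly from Lemma~\ref{structural} together with the definition $c=\gcd(n,N)$; in fact the counting statement in Corollary~\ref{cor-u-cycle} is not needed for this, only the periodicity of the positions modulo $n$ provided by Lemma~\ref{structural}.

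First I would record the set $P=\{k\in\{1,\ldots,N\}:u_k=\Diamond\}$ of positions of $\Diamond$s, with all index arithmetic understood modulo $N$ (the circular case). Lemma~\ref{structural} says precisely that $k\in P$ implies both $k+n\in P$ and $k-n\in P$; iterating this, $k\in P$ implies $k+mn\in P$ for every integer $m$, by a trivial induction on $|m|$ (the step uses $u_{k'+n}=\Diamond$ when $m>0$ and $u_{k'-n}=\Diamond$ when $m<0$, for the current position $k'$).

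Next, since $c=\gcd(n,N)$, Bézout's identity gives integers $a,b$ with $an+bN=c$, hence $an\equiv c\pmod N$. Applying the previous step with $m=a$, for any $k\in P$ we obtain $k+an\in P$, that is, $k+c\in P$. Thus $P$ is invariant under the shift $k\mapsto k+c$, and since $c$ divides $N$ this is exactly the statement that the positions of $\Diamond$s form a union of residue classes modulo $c$, i.e. that they are periodic with period $c$. The same fact can be phrased group-theoretically: $P$ is a union of cosets of the subgroup of $\mathbf{Z}/N\mathbf{Z}$ generated by $n$, and that subgroup is generated by $\gcd(n,N)=c$; alternatively one can feed the $0$/$1$ indicator word of $P$, which has period $n$ and length $N$, into Fine and Wilf's periodicity lemma (Lemma~\ref{FWPL}).

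I do not expect a genuine obstacle here: the real content is already in Lemma~\ref{structural}, and what remains is the elementary observation that invariance of $P$ under translation by $n$ modulo $N$ automatically upgrades to invariance under translation by $\gcd(n,N)$. The only points needing a little care are keeping the index arithmetic modulo $N$ consistent throughout, and noting that $c\mid N$ so that ``period $c$'' is meaningful for a cyclic word of length $N$.
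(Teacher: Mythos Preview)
Your argument is correct and matches the paper's approach: the paper derives this corollary from Corollary~\ref{cor-u-cycle}(ii), whose proof it says ``follows by Lemma~\ref{structural} and Lemma~\ref{FWPL} applied to $n$ and $N$'' --- exactly the Fine--Wilf route you mention as an alternative. Your direct B\'ezout argument is a slightly more elementary rendering of the same idea (in the cyclic setting the Fine--Wilf length hypothesis is automatic, so the two are equivalent here), and your observation that only Lemma~\ref{structural} and $c=\gcd(n,N)$ are needed, not the counting in part~(i), is correct.
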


In the next corollary, we give two proofs for the case when $n$ is a prime number. 

\begin{corollary}\label{cor-10} If $n$ is a prime number, or $n=4$, then there exists no u-p-cycle  for $n$-permutations. \end{corollary}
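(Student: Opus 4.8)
The plan is to derive a contradiction from parts (i) and (iii) of Corollary~\ref{cor-u-cycle}, splitting into the cases ``$n$ prime'' and ``$n=4$''. Recall that a u-p-cycle for $n$-permutations has diamondicity $n-k$ for some $k$, that its length is $N=k!$, and that setting $c=\gcd(n,N)=\gcd(n,k!)$, part (iii) forces $\frac{n}{c}$ to divide $n-k$, and in particular $c\neq 1$ whenever $1\leq k\leq n-1$. The trivial u-p-cycle (all $\Diamond$s) corresponds to $k=0$, which is excluded once we note that a cyclic word of $\Diamond$s alone cannot be a u-p-cycle (there is no way to read off an $n$-permutation from $\Diamond^{\,n}$); and $k=n$ is the classical case with no $\Diamond$s, which is not a u-p-cycle by our convention that a u-p-cycle contains at least one $\Diamond$. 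So we may assume $1\leq k\leq n-1$.

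First I would treat $n$ prime. Then $c=\gcd(n,k!)$ is either $1$ or $n$. Since $1\leq k\leq n-1$ and $n$ is prime, $n\nmid k!$, so $c=1$ — contradicting part (iii). This already rules out all prime $n$ (and recovers the $n=3$ observation from Proposition~\ref{diamond-at-first}'s neighbourhood). As promised in the statement, I would also sketch a second, more elementary proof for prime $n$ that bypasses part (iii): by Lemma~\ref{structural} the positions of $\Diamond$s are $n$-periodic around the cycle, hence $n\mid N=k!$; but $n$ prime and $k<n$ give $n\nmid k!$, a contradiction. This second argument only uses Lemma~\ref{structural} and the length count (i), not the full strength of Lemma~\ref{number_t}/Lemma~\ref{FWPL}.

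Now $n=4$. Here $c=\gcd(4,k!)$ with $k\in\{1,2,3\}$, so $N=k!\in\{1,2,6\}$ and correspondingly $c=\gcd(4,1)=1$, $c=\gcd(4,2)=2$, $c=\gcd(4,6)=2$. The case $k=1$ ($N=1$) is absurd (or killed by $c=1$ in (iii)). For $k=2$ and $k=3$ we have $c=2$, so by (iii) $\frac{n}{c}=2$ must divide the diamondicity $n-k$, i.e.\ $2\mid 4-k$, forcing $k$ even; so $k=3$ is eliminated immediately. The remaining case is $k=2$: diamondicity $2$, length $N=2$, $c=2$. But a u-p-cycle of length $2$ for $4$-permutations must cover $4!=24$ permutations, while by the count in (i) it covers only $k!=2$ of them — or, even more simply, a cyclic word of length $N=2$ cannot even contain a factor of length $n=4$ in any meaningful way. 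This contradiction finishes $n=4$.

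The only genuine subtlety — the step I expect to need the most care — is making the boundary conventions airtight: precisely why $k=0$ and $k=n$ are excluded for a \emph{u-p-cycle} (as opposed to a u-p-word), and why the degenerate small values of $N=k!$ (namely $N=1,2$) cannot sneak past as vacuous solutions. Once those conventions are pinned down, the arithmetic in both cases is immediate from Corollary~\ref{cor-u-cycle}. Everything else is bookkeeping: apply (i) to get $N=k!$, compute $c=\gcd(n,k!)$, and invoke (iii).
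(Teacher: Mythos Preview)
Your core line matches the paper's: use Corollary~\ref{cor-u-cycle}(i) to get $N=k!$ with $1\le k\le n-1$, and then (iii) for the contradiction. For $n$ prime, your argument that $c=\gcd(n,k!)=1$ contradicts (iii) is exactly the paper's; for $n=4$, your elimination of $k=1$ (via $c=1$) and $k=3$ (via $2\nmid 1$) is also what the paper does.

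Two points need correction. First, your alternative sketch for prime $n$ is not right: $n$-periodicity of the $\Diamond$-positions in a \emph{cyclic} word of length $N$ does not force $n\mid N$; it only gives $\gcd(n,N)$-periodicity. The paper's alternative argument is instead a double count: summing $\Diamond$s over all $N=k!$ length-$n$ factors gives $k!(n-k)$, and since each $\Diamond$ is counted $n$ times, $n\mid k!(n-k)$, impossible for prime $n$ with $k<n$. Second, in the $n=4$, $k=2$ case your counting claim is wrong: by (i) each of the $k!=2$ factors covers $n!/k!=12$ permutations, so the arithmetic alone does \emph{not} exclude $N=2$. The paper just asserts ``clearly only $N=6$ can be the length of a u-p-cycle''; your second, informal reason is the correct one --- with $N=2$ and diamondicity $2$ the word is (up to rotation) $\Diamond a$, and every length-$4$ cyclic factor repeats the single non-$\Diamond$ letter, hence is not order-isomorphic to any $4$-permutation. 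Drop the erroneous count and keep that observation, and the $n=4$ case is complete.
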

\begin{proof}
If $n=4$, it follows from Corollary \ref{cor-u-cycle}
that the admissible values of $N$ are $2$ and $6$, corresponding to $k=2,3$, respectively.
Clearly only $N=6$ can be the length of a u-p-cycle, thus  
$k=3$ and $c=\gcd(n,N)=2$. By (iii) in Corollary \ref{cor-u-cycle}, $\frac{n}{c}=2$ divides $n-k=1$, contradiction.

If $n$ is prime, from (iii) in Corollary \ref{cor-u-cycle} (and with the notations therein) 
$\gcd(n,N)=n$, which contradicts (i) in Corollary \ref{cor-u-cycle}, namely, that $n$ divides $N=k!$, with  $k<n$.

An alternative proof for the case when $n$ is prime is as follows. The total number of $\Diamond$s counted in {\em all} factors of length $n$ is $k!(n-k)$. However, each $\Diamond$ was counted exactly $n$ times, so $n$ must divide $k!(n-k)$, which is impossible if $n$ is a prime number since $k<n$.\end{proof}

We conclude the subsection with two more non-existence results, the first of which is also applicable to u-p-words for permutations to be considered in the next subsection. Recall that  by Lemma~\ref{structural}, $\Diamond$s in a u-p-word or a u-p-cycle must occur periodically. 

\begin{theorem}\label{period-2-diamonds} 
For any $n\geq 1$, there are no non-trivial u-p-words or u-p-cycles for $n$-permutations in which $\Diamond$s occur periodically with period~$2$. \end{theorem}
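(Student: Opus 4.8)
The plan is to suppose, for contradiction, that $u=u_1u_2\cdots u_N$ is a non-trivial u-p-word or u-p-cycle for $n$-permutations in which $\Diamond$s occur periodically with period $2$. Since $u$ is non-trivial, it contains at least one non-$\Diamond$ symbol, and since the $\Diamond$s sit on a fixed residue class mod $2$, the non-$\Diamond$ symbols sit on the complementary residue class mod $2$; in particular all $\Diamond$s occupy (say) the odd positions and all genuine values occupy the even positions, or vice versa. Note this already forces $n$ to be even: by Lemma~\ref{structural} the $\Diamond$-positions are $n$-periodic, and a set that is both $2$-periodic and has exactly one element in each window of length $n$ can only exist when $n$ is even (and then the diamondicity is exactly $n/2$). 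So we may write $n=2m$ and the diamondicity is $m$.

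Next I would extract the structural contradiction from counting how a single window of length $n$ can cover permutations. Fix a factor $F=u_{k+1}u_{k+2}\cdots u_{k+n}$ of length $n$; it has $m$ diamonds, all on the same parity of positions, and $m$ genuine values $a_1,a_2,\ldots,a_m$ (listed in order of position) on the other parity. Now slide the window by $2$: the factor $F'=u_{k+3}\cdots u_{k+n+2}$ drops $a_1$ and one $\Diamond$ from the left and picks up a new $\Diamond$ and a new genuine value $a_{m+1}=u_{k+n+2}$ on the right, so its genuine content is $a_2,a_3,\ldots,a_{m+1}$. The key point is that the set of $n$-permutations covered by $F$ is determined entirely by the \emph{pattern} $\red(a_1a_2\cdots a_m)$ together with which parity class the diamonds occupy: the diamonds can be filled by the $m$ remaining values in all $m!$ orders, and this gives $m!$ permutations when $\red(a_1\cdots a_m)$ is not monotone (some filling may coincide with... actually all $m!$ fillings are distinct since they produce distinct relative orders of the $\Diamond$-values, hence distinct permutations). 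Two windows with the same genuine pattern and the same diamond-parity cover exactly the same set of permutations, so if $u$ is universal, distinct windows must have distinct (pattern, parity) data — but there are only two parities, and consecutive same-parity windows share $m-1$ of their $m$ genuine values.

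The cleanest way to finish, I think, is to argue that the genuine subword of $u$ — the subsequence $b_1b_2\cdots$ of all non-$\Diamond$ entries, read in order — must itself behave like a u-word for $m$-permutations with overlap $m-1$, because the set of $n$-permutations covered by the $j$-th window of a given parity is governed by $\red(b_jb_{j+1}\cdots b_{j+m-1})$. Since there are only $m!$ distinct patterns of length $m$ but we need $2\cdot m!$ distinct covered sets (one batch of $m!$ for each parity, and the $m$ permutations per... no: each window covers $m!$ permutations and there are $n! = (2m)!$ permutations total, so we need $(2m)!/m!$ windows, i.e.\ $N/2 = (2m)!/m!$, hence $N=2\binom{2m}{m}m! \cdot$ — this must be reconciled with $N=k!=m!$ from Corollary~\ref{cor-u-cycle}(i) in the cyclic case), we obtain an immediate numerical clash: $(2m)!/m! = m!$ forces $(2m)! = (m!)^2$, which fails for all $m\geq 1$ except we should double-check $m=1$ ($n=2$): then $(2)! = 2$ and $(1!)^2=1$, still a contradiction, consistent with the claim. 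For the u-p-word case one argues similarly, allowing at most the one extra boundary repetition as in the proof of Lemma~\ref{structural}, which changes the count by $O(1)$ and cannot close the exponential gap between $m!$ and $(2m)!/m!$. I expect the main obstacle to be handling the monotone patterns carefully (where $\red(\pi 0)$ and $\pi n$ subtleties of Lemma~\ref{structural} reappear) and making the "genuine subword is a u-word for $m$-permutations" reduction fully rigorous in the non-cyclic case, including the boundary window that may legitimately repeat a pattern; but the counting mismatch $(2m)! \neq (m!)^2$ is robust enough to absorb any $O(1)$ boundary corrections.
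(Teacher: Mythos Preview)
Your counting argument contains a decisive error. A window of length $n=2m$ with $m$ diamonds does \emph{not} cover $m!$ permutations; it covers $\dfrac{n!}{(n-m)!}=\dfrac{(2m)!}{m!}$ permutations (as stated explicitly in the paper just before Subsection~\ref{definitions}, and easy to verify: one chooses which $m$ values of $[2m]$ go into the $\Diamond$ slots, in $\binom{2m}{m}$ ways, and then orders them in $m!$ ways). With the correct count, the number of windows required is $(2m)!\big/\frac{(2m)!}{m!}=m!$, which agrees perfectly with $N=k!=m!$ from Corollary~\ref{cor-u-cycle}(i). So no numerical contradiction of the form $(2m)!=(m!)^2$ arises, and your global counting strategy collapses; the ``$O(1)$ boundary correction'' for the u-p-word case cannot rescue an identity that holds exactly rather than failing exponentially.

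What does survive from your sketch is the observation that the set of permutations covered by a window depends only on the pair (pattern of the genuine entries, parity of the $\Diamond$ positions). But you then need a \emph{local} overlap argument, not a global count: the windows starting at positions $2j-1$ and $2j$ share the \emph{same} genuine pattern $\sigma=\red(b_j\cdots b_{j+m-1})$ with \emph{opposite} parities, and these two coverage sets are never disjoint (for instance, the permutation with $\pi_{2i-1}=2\sigma_i-1$ and $\pi_{2i}=2\sigma_i$ lies in both). That is precisely the mechanism of the paper's proof, which directly exhibits a permutation covered by both $u_1\cdots u_n$ and $u_2\cdots u_{n+1}$. Your proposal never reaches this step; it stops at the (incorrect) enumeration.
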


\begin{proof} Suppose that such a u-p-word, or u-p-cycle $u=u_1u_2\cdots u_N$ for permutations exists, where $N\geq n+1$ because $u$ is non-trivial. Then $u_1u_2\cdots u_n$ is of one of the following four forms: 
\begin{enumerate}
\item $u_1\Diamond u_3\Diamond u_5\cdots u_n$;
\item $u_1\Diamond u_3\Diamond u_5\cdots \Diamond$;
\item $\Diamond u_2\Diamond u_4 \Diamond \cdots u_n$;
\item $\Diamond u_2\Diamond u_4 \Diamond \cdots \Diamond$.
\end{enumerate}
In either case, we claim that there exists an $n$-permutation that is covered by both $u_1u_2\cdots u_n$ and $u_2u_3\cdots u_{n+1}$, contradicting $u$'s properties. Next we provide such permutations for the first two cases; the remaining two cases are similar and their considerations are omitted.
\begin{enumerate}
\item $u_1a_3 u_3a_5 u_5\cdots a_nu_n$, where $\red(a_3a_5\cdots a_n)=\red(u_3u_5\cdots u_n)$ and each of $a_i$s is larger than any $u_j$ (clearly, the $\Diamond$s can be assigned in such values). This permutation is also covered by $u_2u_3\cdots u_{n+1}$ by choosing the values of the $\Diamond$s from left to right to be $b_3b_5\cdots b_{n+2}$, such that $\red(b_3b_5\cdots b_{n+2})=\red(u_1u_3\cdots u_n)$, and  each of $b_i$s is smaller than any $u_j$.

\item $u_1a_1 u_3a_3 \cdots u_na_n$, where $\red(a_1a_3\cdots a_n)=\red(u_1u_3\cdots u_n)$ and each of $a_i$s is larger than any $u_j$. This permutation is also covered by $u_2u_3\cdots u_{n+1}$ by choosing the values of the $\Diamond$s from left to right to be $b_3b_5\cdots b_{n+1}$, such that $\red(b_3b_5\cdots b_{n+1})=\red(u_1u_3\cdots u_{n-1})$, and  each of $b_i$s is smaller than any $u_j$.
\end{enumerate}
\end{proof}

\begin{theorem}\label{period-3-diamonds} For any $n\geq 1$, there are no non-trivial u-p-cycles for $n$-permutations in which $\Diamond$s occur periodically with period~$3$.  \end{theorem}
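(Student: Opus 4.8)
The plan is to mimic the strategy of Theorem~\ref{period-2-diamonds}: assume a non-trivial u-p-cycle $u=u_1u_2\cdots u_N$ for $n$-permutations exists in which $\Diamond$s occur periodically with period~$3$, and then exhibit a single $n$-permutation that is covered twice, contradicting the defining property. By Corollary~\ref{cor-u-cycle}(i) the diamondicity is $n-k$ with $N=k!$, and by Lemma~\ref{structural} the $\Diamond$-positions are $3$-periodic, so within any length-$n$ window the $\Diamond$s sit at positions congruent to a fixed residue $r\bmod 3$ (up to which residue class falls inside the window). First I would set up notation for the three possible ``phases'' of a length-$n$ window relative to the period-$3$ pattern, exactly as the four cases in the proof of Theorem~\ref{period-2-diamonds}, and note that it suffices to treat the window $u_1u_2\cdots u_n$ together with its shift $u_2u_3\cdots u_{n+1}$ (the non-triviality $N\geq n+1$ guarantees this shifted window exists in both the cyclic and, if one wanted, linear cases).

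Next, for each phase I would construct the offending permutation by the same ``push the $\Diamond$-values to the extremes'' trick used before. In $u_1u_2\cdots u_n$, fill the $\Diamond$s (which occupy one residue class of positions) with values all larger than every genuine letter $u_j$, chosen so that their relative order copies the relative order of the non-$\Diamond$ letters in the appropriate positions; call the resulting word $w$. The claim is that the very same $w$ (as a permutation, i.e.\ via $\red$) is covered by the shifted window $u_2u_3\cdots u_{n+1}$, where one fills that window's $\Diamond$s — now a possibly different residue class of positions — with values all \emph{smaller} than every genuine letter, again arranged to copy the correct relative order. Because the two windows overlap in $u_2\cdots u_n$, the genuine letters are forced; the freedom lies only in the $\Diamond$ slots, and since in one window they are the global maxima and in the other the global minima, there is enough room to make the two reduced forms coincide. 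I would write out the explicit value assignments for one representative phase (say the window has the form $u_1u_2\Diamond u_4u_5\Diamond\cdots$) and remark that the remaining two phases are handled identically, as was done in Theorem~\ref{period-2-diamonds}.

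The main obstacle I anticipate is purely bookkeeping: with period~$3$ there are three residue classes and hence (depending on $n\bmod 3$ and on where the window starts) several sub-cases for how many $\Diamond$s a length-$n$ window contains and which of its positions are $\Diamond$s versus genuine letters; one must check that in \emph{every} such configuration the overlap region $u_2\cdots u_n$ still leaves both windows with enough free $\Diamond$ slots, positioned so that the two extreme-value fillings can be made order-isomorphic. A secondary subtlety is the boundary/monotone issue that already appeared in Lemma~\ref{structural}: when the pattern on the non-$\Diamond$ letters of the overlap is monotone, one should double-check that the extreme-value fillings do not accidentally force the two windows to cover \emph{distinct} permutations for trivial reasons — but since here we are constructing a coincidence rather than ruling one out, monotonicity actually helps rather than hurts. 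I expect the verification to go through uniformly once the residue-class notation is fixed, so the theorem should follow without needing the number-theoretic machinery (Lemma~\ref{number_t}, Lemma~\ref{FWPL}) invoked for Corollary~\ref{cor-u-cycle}.
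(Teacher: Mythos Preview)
Your plan has a genuine gap: the ``push the $\Diamond$-values to the extremes'' trick from Theorem~\ref{period-2-diamonds} does \emph{not} transfer to period~$3$, and the obstacle is structural, not just bookkeeping. In the period-$2$ case the reason the trick works is that at \emph{every} window position exactly one of the two consecutive windows has a $\Diamond$; the positions are complementary, so the large-fill in one window and the small-fill in the other can be made order-isomorphic. With period~$3$ this complementarity is lost. Concretely, suppose the $\Diamond$s sit at $u$-indices $\equiv 0\pmod 3$ and $3\mid n$. Then at every window position $\equiv 1\pmod 3$ \emph{both} $u_1\cdots u_n$ and $u_2\cdots u_{n+1}$ carry genuine letters (namely $u_1,u_4,u_7,\ldots$ in the first and $u_2,u_5,u_8,\ldots$ in the second). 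Your construction forces the first window's permutation to have its $n/3$ largest values at positions $3,6,\ldots,n$, while the second window's permutation has its $n/3$ smallest values at positions $2,5,\ldots,n-1$. For these to coincide you would need, in the first window, the letters $u_2,u_5,\ldots,u_{n-1}$ to be smaller than all of $u_1,u_4,\ldots,u_{n-2}$ --- a constraint you have no control over. So the two fillings will not be order-isomorphic in general, and the contradiction does not materialise. (Shifting by~$2$ instead of~$1$ runs into the same problem at a different residue class.)

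The paper's proof proceeds quite differently. It first uses cyclicity together with Corollary~\ref{cor-u-cycle} to force $3\mid n$, and then does \emph{not} look at an arbitrary window but at the specific length-$n$ factor that covers the increasing permutation $12\cdots n$. There are three phases for this factor; in two of them one shows (using the cyclic wrap-around) that $12\cdots n$ itself is covered a second time by an adjacent factor, and in the third phase a different permutation is shown to be covered twice by comparing the factor with its right neighbour. The argument is ad hoc --- the paper itself remarks that it does not extend to period~$4$ --- and it genuinely relies on the cyclic structure, which your proposal does not exploit.
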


\begin{proof} 
Suppose that such a u-p-cycle $u$ exists. Note that 3 must divide $n$.  Indeed, if 3 does not divide $n$, then we can connect any pair of positions in $u$ cyclically with steps of length 3, so by Corollary~\ref{cor-u-cycle}, all symbols would have to be $\Diamond$s, making $u$ trivial and contradicting the assumption. Thus, we have three cases to consider based on which factor covers the increasing $n$-permutation. In each of the cases it is crucial that our universal word $u$ is cyclic, because we do not know the location of the factor covering the increasing permutation. Without loss of generality, we assume that in the factor covering the increasing permutation, the non-$\Diamond$ symbols are $1,2,3,\ldots$.

\begin{itemize}
\item The increasing permutation is covered by the factor $$\Diamond 12 \Diamond 34\cdots \Diamond \left(\frac{2n}{3}-1\right)\frac{2n}{3}.$$ Then this permutation is covered one more time starting from the letter 1, since the value of the $\Diamond$ next to $\frac{2n}{3}$ (cyclically) can be chosen $\left(\frac{2n}{3}+1\right)$; contradiction.
\item The increasing permutation is covered by the factor $$12\Diamond 34 \Diamond \cdots  \left(\frac{2n}{3}-1\right)\frac{2n}{3}\Diamond.$$ Picking the value of the $\Diamond$ immediately to the left (cyclically) of the letter 1 to be $1^-$ we see that the increasing permutation is covered one more time starting from this position; contradiction.  
\item The increasing permutation is covered by the factor \begin{equation}\label{eq-1} 1\Diamond 2 3\Diamond 4\cdots \left(\frac{2n}{3}-1\right)\Diamond \frac{2n}{3}.\end{equation} 
Consider the factor 
\begin{equation}\label{eq-2} 2 3\Diamond 4\cdots \left(\frac{2n}{3}-1\right)\Diamond \frac{2n}{3} x\Diamond \end{equation}  of length $n$, where $x$ is some letter. No matter what $x$ is, we cover some permutation (not necessarily increasing) twice, which leads to a contradiction. Indeed, the rightmost $\Diamond$ in (\ref{eq-2}) can be chosen to be maximum in the permutation, while the rightmost $\Diamond$ in (\ref{eq-1}) can be chosen to be equivalent to $x$ in (\ref{eq-2}).
\end{itemize}
\end{proof}

\begin{remark}
{\em Unfortunately, the arguments in Theorems~\ref{period-2-diamonds} and \ref{period-3-diamonds} do not seem to be possible to extend to periods of length $4$, or more. }
\end{remark}

\subsubsection{U-p-words for permutations with $\Diamond$(s)}

Clearly, $\Diamond$ and $\Diamond 1$ are, respectively, u-p-words for the 1-permutation and 2-permutations. The following proposition shows that these are the only u-p-words with a single $\Diamond$ placed at the beginning of the word. Before stating the proposition, we introduce a notion related to the clustered graph of overlapping permutations that will be used in some of our proofs. 

\begin{definition}\label{reach} Let $u_iu_{i+1}\cdots u_{i+n}$ be a factor of a u-p-word $u_1u_2\cdots u_N$ for $n$-permutations. We say that the edge coming out from the permutation $\red(u_iu_{i+1}\cdots u_{i+n-1})$ in the clustered graph of overlapping permutations is used to {\em reach} the permutation $\red(u_{i+1}\cdots u_{i+n})$.
\end{definition}

\begin{proposition}\label{diamond-at-first} Let $n\geq 3$. No u-p-word for $n$-permutations with a single $\Diamond$ of the form $u=\Diamond u_2u_3\cdots u_{N}$ exists. \end{proposition}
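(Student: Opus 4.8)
The plan is to suppose, for contradiction, that $u = \Diamond u_2u_3\cdots u_N$ is a u-p-word for $n$-permutations with exactly one $\Diamond$, and derive a conflict by counting which permutations the initial factors can cover. Since the diamondicity is $1$ and $u$ contains a single $\Diamond$, the word has length $N = 1! \cdot \text{(something)}$... more usefully: by Lemma~\ref{structural}, a $\Diamond$ in a u-p-word forces $\Diamond$s at distance $n$ on both sides whenever those positions exist; since there is only \emph{one} $\Diamond$ and it sits in position $1$, the position $1+n$ must not exist, i.e.\ $N \leq n$. But a u-p-word for $n$-permutations must cover all $n!$ permutations, and a single factor of length $n$ with one $\Diamond$ covers only $n$ of them; with $N \leq n$ there are at most $N - n + 1 \leq 1$ factors of length $n$, so at most $n < n!$ permutations are covered for $n \geq 3$. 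That already gives the contradiction.

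Concretely, the steps I would carry out are: (1) invoke Lemma~\ref{structural} for the non-circular case to conclude that if $u_1 = \Diamond$ were not the rightmost possible starting index of a length-$(n+1)$ window, i.e.\ if $u_{1+n}$ existed, then $u_{1+n} = \Diamond$, contradicting uniqueness of the $\Diamond$; hence $N < 1 + n$, that is $N \leq n$. (2) Note that $u$ contains at least one factor of length $n$ (otherwise it covers nothing), so $N \geq n$, forcing $N = n$. (3) Then $u = \Diamond u_2\cdots u_n$ is the \emph{only} length-$n$ factor, and it covers exactly $n$ distinct $n$-permutations (namely those obtained by reducing $\Diamond u_2\cdots u_n$ over the $n$ admissible insertions of a value for the $\Diamond$). (4) Since $n < n!$ for $n \geq 3$, some $n$-permutation is uncovered, contradicting the definition of a u-p-word.

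Alternatively — and this may be the version the authors actually intend, given the machinery set up in Definition~\ref{reach} about ``reaching'' permutations in the clustered graph — one can argue more structurally without the length bound: the factor $\Diamond u_2\cdots u_n$ can be the first factor covering, say, the monotone permutations and a few of their neighbours, but the edge used to \emph{reach} $\red(u_2\cdots u_{n+1})$ is forced, and one tracks that from position $2$ onward the word $u_2u_3\cdots u_N$ is $\Diamond$-free, hence behaves like an ordinary (classical) u-word whose first letter-window $\red(u_2\cdots u_n)$ is now pinned down; counting shows the $\Diamond$-window plus the classical part cannot simultaneously cover $n!$ permutations exactly once. The main obstacle is making the bookkeeping between ``permutations covered by the $\Diamond$-factor'' and ``permutations covered by the deterministic tail'' precise enough that the double-cover (or under-cover) is visible; the length argument in the previous paragraph sidesteps this obstacle entirely and is the cleaner route, so I would lead with it and only mention the structural viewpoint as a remark.
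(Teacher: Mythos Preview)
Your proposal is correct and is essentially the same argument as the paper's, just with the two steps reversed: the paper first observes that $N\geq n+1$ (implicitly for the counting reason you spell out in step~(3)--(4)) and then applies Lemma~\ref{structural} to force a second $\Diamond$ at position $n+1$, whereas you apply Lemma~\ref{structural} first to bound $N\leq n$ and then count. Your alternative ``structural'' route via Definition~\ref{reach} is not what the paper does and is unnecessary here; the length argument you lead with is exactly the intended one.
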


\begin{proof} Since $n\geq 3$, it is clear that $N\geq n+1$. We can now apply Lemma~\ref{structural} to obtain the desired result. \end{proof}

The case $n=4$ in the next proposition follows from our more general Corollary~\ref{nice-corol} below. However, we keep this case in Proposition~\ref{prop1} for yet another illustration of our straightforward approach to prove some of the non-existence statements.

\begin{proposition}\label{prop1}
For $n=3,4$ there is no u-p-word for $n$-permutations with a single $\Diamond$ of the form $u=u_1\Diamond u_3u_4\cdots u_N$.  
\end{proposition}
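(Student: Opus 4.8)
The statement to prove is Proposition~\ref{prop1}: for $n=3$ and $n=4$ there is no u-p-word for $n$-permutations of the form $u=u_1\Diamond u_3u_4\cdots u_N$, i.e.\ with a single $\Diamond$ sitting in position~$2$. Since the single-$\Diamond$ hypothesis means the diamondicity is~$1$, by part~(i) of Corollary~\ref{cor-u-cycle}-type counting (adapted to words), a factor of length~$n$ with one $\Diamond$ covers $n!/(n-1)!=n$ permutations, so we would need exactly $(n-1)!$ factors of length~$n$, forcing $N=(n-1)!+n-1$. For $n=3$ this gives $N=4$; for $n=4$ this gives $N=9$. The plan is to treat these as small finite problems: first pin down $N$, then show that no assignment of the non-$\Diamond$ letters $u_1,u_3,u_4,\ldots,u_N$ can make $u$ universal.

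\textbf{The $n=3$ case.} Here $u=u_1\Diamond u_3 u_4$ has length $4$. The three length-$3$ factors are $u_1\Diamond u_3$, $\Diamond u_3 u_4$, and $u_3u_4u_1$ is \emph{not} a factor since $u$ is not cyclic---so the factors are only $u_1\Diamond u_3$ and $\Diamond u_3u_4$. But $u_1\Diamond u_3$ covers $3$ permutations and $\Diamond u_3u_4$ covers $3$ permutations, for a total of at most $6$ counted with multiplicity, and we need all $6$ distinct $3$-permutations covered exactly once. So we need the two covered sets to be disjoint and to partition $S_3$. The factor $u_1\Diamond u_3$ covers exactly those permutations whose first letter is order-isomorphic to $u_1$ relative to $u_3$ and whose last letter is order-isomorphic to $u_3$: concretely, if $u_1<u_3$ this is $\{$patterns with $\pi_1<\pi_3\}=\{123,132,231\}$ wait---one must be careful; the cleanest route is simply to enumerate. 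There are essentially two cases up to the choice $u_1<u_3$ versus $u_1>u_3$ (with the reverse/complement symmetry cutting this to one), and in each case one checks directly which $6$-element multiset of permutations is produced and sees it is never all of $S_3$ without repetition. I expect this to be a half-page explicit check, and the existing machinery (Lemma~\ref{structural}, and Definition~\ref{reach} phrased in terms of reaching permutations in the clustered graph) gives a clean way to phrase it: the first factor reaches one cluster-permutation from $\red(u_1u_3)$-ish data and the second from another, and two reaches cannot cover all six length-$3$ permutations.

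\textbf{The $n=4$ case.} Here $u=u_1\Diamond u_3u_4u_5u_6u_7u_8u_9$ has length $9$, giving $6$ length-$4$ factors: $F_1=u_1\Diamond u_3u_4$, $F_2=\Diamond u_3u_4u_5$, $F_3=u_3u_4u_5u_6$, $F_4=u_4u_5u_6u_7$, $F_5=u_5u_6u_7u_8$, $F_6=u_6u_7u_8u_9$. Only $F_1,F_2$ contain a $\Diamond$, each covering $4$ permutations; $F_3,\ldots,F_6$ each cover $1$ permutation; total $4+4+1+1+1+1=12\neq 24$. \emph{This already gives a contradiction}, since a u-p-word for $4$-permutations must cover all $24$ permutations (at least once), and the counting shows at most $12$ are covered. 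Hence no such $u$ exists. The same numerical argument in fact reproves the $n=3$ case ($3+3=6$, which is exactly $3!$, so here the count is not automatically a contradiction and the $n=3$ argument genuinely needs the combinatorial analysis above), and it also suggests how Proposition~\ref{diamond-at-first} and this proposition fit the pattern that will be generalized in Corollary~\ref{nice-corol}.

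\textbf{Main obstacle.} The $n=4$ half is essentially immediate from the diamondicity-based counting, so the only real work is the $n=4$ ``why $N=9$'' step and the $n=3$ finite case analysis. The potential subtlety I want to be careful about: fixing $N=(n-1)!+n-1$ requires knowing the diamondicity of $u$ is exactly~$1$, which follows from Lemma~\ref{structural} (the single $\Diamond$ cannot force further $\Diamond$s since there is only one), so diamondicity~$=1$ and each length-$n$ factor covers $n$ permutations; then $N-(n-1)$ factors times $n$ must be at least $n!$, giving $N\ge (n-1)!+n-1$, and universality with no repetition forces equality. Writing this cleanly---especially being careful that the ``exactly once'' in the u-word definition interacts correctly with the counting---is the main thing to get right; the rest is routine enumeration, which for $n=3$ I would present as an explicit two-line case check and for $n=4$ needs no enumeration at all.
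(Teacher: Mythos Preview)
Your $n=4$ argument has a genuine gap, and it stems from a misreading of what ``diamondicity $=1$'' buys you. You write that the single-$\Diamond$ hypothesis gives diamondicity~$1$ and hence $N=(n-1)!+n-1=9$. But the length formula $N=(n-d)!+n-1$ is derived under the assumption that \emph{every} length-$n$ factor contains exactly $d$ diamonds; that is precisely what ``diamondicity $=d$'' means. You then immediately contradict this by observing that $F_3,\ldots,F_6$ contain no $\Diamond$ at all. So the computation ``$4+4+1+1+1+1=12$'' is performed on a word that, by your own premise, cannot exist with $N=9$. The contradiction was already present one step earlier, and your coverage count is being applied to an inconsistent object.

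The clean way to run your idea is to use Lemma~\ref{structural} the other way round: if $u$ is a u-p-word with $u_2=\Diamond$ and $N\ge n+2$, then $u_{2+n}=\Diamond$ as well, contradicting the single-$\Diamond$ hypothesis. Hence $N\le n+1$. For $n=4$ this gives $N\le 5$, so there are at most two length-$4$ factors, covering at most $8<24$ permutations---done. (This is exactly the route the paper later takes in Corollary~\ref{nice-corol}.) Your sentence ``the single $\Diamond$ cannot force further $\Diamond$s since there is only one'' is where the logic inverts: Lemma~\ref{structural} does not say a $\Diamond$ \emph{tries} to force another one; it says that in any genuine u-p-word the position $k+n$, if it exists, \emph{must} already be a $\Diamond$. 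With only one $\Diamond$ available, the conclusion is that position $k+n$ does not exist.

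For $n=3$ your setup happens to be correct (here $N=4$ does satisfy $N\le n+1$, so both factors really do contain the $\Diamond$ and diamondicity~$1$ is consistent), but you never actually carry out the case check; you stop at ``I expect this to be a half-page explicit check.'' The check is short: $u_1\Diamond u_3$ covers the three permutations with $\pi_1,\pi_3$ ordered like $u_1,u_3$, and $\Diamond u_3u_4$ covers the three with $\pi_2,\pi_3$ ordered like $u_3,u_4$; in every one of the four sign choices these two triples overlap. The paper's own proof of this proposition does a direct case analysis for both $n=3$ and $n=4$ (looking at the first few letters and finding a repeated or missing permutation in each branch), so your counting approach for $n=4$, once repaired as above, is genuinely shorter than what the paper does there.
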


\begin{proof} Let $n=3$. Without loss of generality (using the complement operation, if necessary), we can assume that $u$ begins with $1\Diamond 2$. Then the possible continuations of $u$ are $1\Diamond 22^+$, $1\Diamond 22^-$ and $1\Diamond 21^-$. But then the following permutations are covered twice, respectively, 123, 132 and 132. 

Let $n=4$. Without loss of generality (using the complement operation, if necessary), we can assume that there are three cases of beginning of $u$ to consider.

\begin{itemize}
\item $1\Diamond 23$. Possible continuations are as follows.
\begin{itemize}
\item $1\Diamond 234$. The permutation 1234 is covered twice; contradiction.
\item $1\Diamond 232^+x$ for some $x$. Note that so far three permutations, namely,1324, 1423, and $\red(232^+x)$ from the cluster with the signature ``132'' were covered. But the fourth permutation from that cluster will never be covered (or else, because of $\Diamond 232^+$, some permutation ending with the pattern $132$ will be covered twice).
\item $1\Diamond 231x$ for some $x$.  Because of the factor $\Diamond 231$, the permutation $\red(231x)$ will be the only one covered in the cluster with the signature ``231'' (no such permutation can be covered starting at the leftmost position, or at the $\Diamond$). Contradiction with the cluster having four permutations.

\end{itemize}
\item $1\Diamond 32$. 
\begin{itemize}
\item $1\Diamond 324x$ for some $x$. Note that so far three permutations, namely, 2143, 3142, and $\red(324x)$ from the cluster with the signature ``213'' were covered. But the fourth permutation from that cluster will never be covered (or else, because of $\Diamond 324$, some permutation ending with the pattern $213$ will be covered twice).
\item $1\Diamond 322^+x$ for some $x$. Because of the factor $\Diamond 322^+$, the permutation $\red(322^+x)$ will be the only one covered in the cluster with the signature ``312'' (no such permutation can be covered starting at the leftmost position, or at the $\Diamond$). Contradiction with the cluster having four permutations.
\item $1\Diamond 321$. The permutation 1432 is covered twice; contradiction.
\end{itemize}
\item $2\Diamond 13$. 
\begin{itemize}
\item $2\Diamond 134$. The permutation 3124 is covered twice; contradiction.
\item $2\Diamond 132x$ for some $x$. Because of the factor $\Diamond 132$, the permutation $\red(132x)$ will be the only one covered in the cluster with the signature ``132'' (no such permutation can be covered starting at the leftmost position, or at the $\Diamond$). Contradiction with the cluster having four permutations.
\item $2\Diamond 131^-x$ for some $x$. Note that so far three permutations, namely, 2314, 2413, and $\red(131^-x)$ from the cluster with the signature ``231'' were covered. But the fourth permutation from that cluster will never be covered (or else, because of $\Diamond 131^-$, some permutation ending with the pattern $231$ will be covered twice.
\end{itemize}
\end{itemize}
\end{proof}

%

Let $u$ be a u-p-word for $n$-permutations with diamondicity $d$. 
It follows (see also the proof of the first part of Corollary \ref{cor-u-cycle}) that $u$ must contain
exactly $(n-d)!$ different factors, and thus the length of $u$ is $(n-d)!+n-1$.

\begin{theorem}\label{constraint_n}
Let $u$ be a non-trivial u-p-word for $n$-permutations, and let $f$ be the number of $\Diamond$s in $u$. 
Then $n\leq 3f +1$.
\end{theorem}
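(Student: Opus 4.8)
The plan is to combine the length identity recorded just before the theorem, namely $N=|u|=(n-d)!+n-1$ where $d$ is the diamondicity of $u$ (which is well defined by the discussion following Lemma~\ref{structural}), with the periodicity of the $\Diamond$-positions supplied by Lemma~\ref{structural}. Since $u$ is a u-p-word it contains at least one $\Diamond$, so $d\ge 1$; non-triviality means not every symbol is a $\Diamond$, so $d\le n-1$. In particular $N\ge n$, hence the length-$n$ factor $u_1u_2\cdots u_n$ exists.

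First I would record two lower bounds on $f$. (i) The factor $u_1u_2\cdots u_n$ has length $n$, so by definition of diamondicity it contains exactly $d$ occurrences of $\Diamond$; thus $f\ge d$. (ii) Fix a position $k$ with $u_k=\Diamond$. Iterating Lemma~\ref{structural} up and down in steps of $n$ shows that \emph{every} index of $\{1,2,\dots,N\}$ congruent to $k$ modulo $n$ carries a $\Diamond$. The $n$ residue classes modulo $n$ partition $\{1,\dots,N\}$ into blocks whose sizes differ by at most $1$, so the block of $k$ has size at least $\lfloor N/n\rfloor\ge (N-n+1)/n=(n-d)!/n$; hence $f\ge (n-d)!/n$.

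Next I would split on the size of $d$. If $3d\ge n-1$, then bound (i) already gives $3f+1\ge 3d+1\ge n$, and we are done; note this disposes of all $n\le 4$, since there $d\ge 1\ge (n-1)/3$. If instead $3d\le n-2$, then (using $d\ge 1$) we get $n\ge 5$ and $n-d\ge k$, where $k:=\lceil 2(n+1)/3\rceil\ge 4$. A routine comparison of a factorial with a quadratic then finishes the argument: using $k!\ge k^2$ for $k\ge 4$ together with $k\ge 2(n+1)/3$ we obtain
\[
(n-d)!\ \ge\ k!\ \ge\ k^2\ \ge\ \frac{4(n+1)^2}{9}\ \ge\ \frac{n(n-1)}{3},
\]
and then bound (ii) yields $3f+1\ \ge\ 3\cdot\frac{(n-d)!}{n}+1\ \ge\ (n-1)+1\ =\ n$.

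The only step with real content is this second case: one must ensure that when the diamondicity $d$ is small, $u$ is automatically long enough for the factorial $(n-d)!$ to dominate the quadratic $n(n-1)/3$. This rests on the crude estimate $n-d\ge 2(n+1)/3$ (immediate from $3d\le n-2$) and on the elementary inequality $k!\ge k^2$ for $k\ge 4$ (a one-line induction). Everything else is bookkeeping with $N=(n-d)!+n-1$ and the periodicity statement of Lemma~\ref{structural}.
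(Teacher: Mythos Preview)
Your proof is correct, but it takes a slightly different route from the paper's. The paper exploits the stronger lower bound $f\ge d\cdot\lfloor N/n\rfloor$, obtained by noting that \emph{all} $d$ residue classes modulo $n$ that meet a $\Diamond$ consist entirely of $\Diamond$s; this yields the single chain
\[
f\ \ge\ \frac{(n-d)!}{n}\cdot d\ \ge\ \frac{(n-d)(n-d-1)}{n}\ \ge\ n-(2d+1),
\]
whence $n\le f+2d+1\le 3f+1$ using $d\le f$. You instead keep only one residue class in bound~(ii), obtaining the weaker $f\ge (n-d)!/n$, and then compensate with a case split: when $d$ is large, bound~(i) alone suffices; when $d$ is small, $(n-d)!$ is large enough that the single-class bound already forces $3f\ge n-1$, via the factorial-versus-quadratic estimate $k!\ge k^2\ge 4(n+1)^2/9\ge n(n-1)/3$ for $k=\lceil 2(n+1)/3\rceil\ge 4$. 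Both arguments rest on the same two ingredients (the length formula $N=(n-d)!+n-1$ and the $n$-periodicity of $\Diamond$s from Lemma~\ref{structural}); the paper's version avoids the case analysis at the cost of the slightly subtler observation that the $d$ diamond residue classes are disjoint, while yours is a bit longer but perhaps more elementary in its final inequality.
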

\newpage
\begin{proof}
Let $d\geq 1$ be the diamondicity of $u$. Thus, the length of $u$ is $(n-d)!+n-1$, and
the number $f$ of $\Diamond$ symbols in $u$ satisfies: 
\begin{eqnarray*}
f & \geq &  \left\lfloor\frac{(n-d)!+n-1}{n}\right\rfloor\cdot d\\
  & =    &  \left\lceil\frac{(n-d)!}{n}\right\rceil\cdot d\\
  & \geq &  \frac{(n-d)!}{n}\cdot d\\
  & \geq & \frac{(n-d)\cdot (n-(d+1))}{n}\\
  & \geq &  n-(2d+1).
\end{eqnarray*}
It follows that $n\leq f+2d+1\leq 3f+1$, and the statement holds.
\end{proof}
As is mentioned above, $\Diamond$ is a trivial u-p-word for the 1-permutation, and $\Diamond 1$
is a u-p-word for $2$-permutations. These are the only u-p-words for permutations with a single $\Diamond$ as shown by the following corollary.

\begin{corollary}\label{nice-corol}
For $n\geq 3$, there is no u-p-word for $n$-permutations with a single $\Diamond$.
\end{corollary}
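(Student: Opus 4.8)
The plan is to combine Theorem~\ref{constraint_n} with the structural constraints coming from Lemma~\ref{structural}, and then clear up the two surviving small cases by hand. Suppose $u=u_1u_2\cdots u_N$ is a u-p-word for $n$-permutations with $n\ge3$ containing exactly one $\Diamond$, located at position $k$. Applying Theorem~\ref{constraint_n} with $f=1$ gives $n\le 3\cdot1+1=4$, so only $n=3$ and $n=4$ remain to be treated.

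First I would pin down the length of $u$. Since $N\ge n$, the position $k$ lies inside at least one length-$n$ factor, and by Lemma~\ref{structural} every length-$n$ factor of $u$ carries the same number of $\Diamond$s; as $u$ has only one $\Diamond$ in total, that common number is $1$, i.e.\ the diamondicity of $u$ is $d=1$. Hence, as recorded just before Theorem~\ref{constraint_n}, $N=(n-1)!+n-1$. On the other hand, Lemma~\ref{structural} forbids a second $\Diamond$, so we must have $k+n>N$ and $k-n<1$ simultaneously, forcing $N-n<k\le n$, and in particular $N<2n$, that is $(n-1)!+n-1<2n$.

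For $n=4$ this reads $9<8$, which is false, so no such $u$ exists; equivalently $N=9$ makes the requirements $k>5$ and $k<5$ incompatible. It remains to handle $n=3$, where $N=4$. By Lemma~\ref{structural} the $\Diamond$ cannot occupy position $1$ or position $4$ (either would force a second $\Diamond$ three places away, which still lies in $u$), so $k\in\{2,3\}$. If $k=2$ then $u=u_1\Diamond u_3u_4$, which is impossible by Proposition~\ref{prop1}. If $k=3$, I would pass to the reverse of $u$: the reverse of a u-p-word is again a u-p-word (as already used in the proof of Lemma~\ref{structural}), and the reversed word has its unique $\Diamond$ in position $2$, so Proposition~\ref{prop1} applies once more. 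This exhausts all cases, proving the corollary.

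I do not anticipate a genuine obstacle: the substantive content is entirely supplied by Theorem~\ref{constraint_n}, Lemma~\ref{structural}, and Proposition~\ref{prop1}. The only points deserving a line of care are the justification that a single $\Diamond$ forces diamondicity $1$ (hence the exact length $(n-1)!+n-1$), and the reduction of the $n=3$, $k=3$ subcase to Proposition~\ref{prop1} via the reverse symmetry; everything else is an elementary inequality.
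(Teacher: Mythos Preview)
Your proof is correct and follows essentially the same route as the paper: invoke Theorem~\ref{constraint_n} to reduce to $n\in\{3,4\}$, eliminate $n=4$ via the length contradiction coming from Lemma~\ref{structural} and the diamondicity formula $N=(n-1)!+n-1$, and handle $n=3$ with Proposition~\ref{prop1} together with the reverse symmetry. The only cosmetic difference is that the paper cites Proposition~\ref{diamond-at-first} alongside Proposition~\ref{prop1} for the $n=3$ case (rather than ruling out the end positions directly via Lemma~\ref{structural} as you do), and it phrases the $n=4$ contradiction as ``length at most $7$'' rather than your equivalent $N<2n$; these are the same argument.
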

\begin{proof}
By Theorem~\ref{constraint_n}, if $u$ is a u-p-word for $n$-permutations with a single $\Diamond$, then 
$n\leq 4$.

Using the reverse operation, if necessary, one can assume that the single $\Diamond$ in a u-p-word for permutations is in its first half. Thus, by Propositions~\ref{diamond-at-first} and~\ref{prop1}, no u-p-word exists for $3$-permutations. 

If $n=4$, then by Lemma~\ref{structural}, since we have exactly one $\Diamond$, the length of a u-p-word must be at most 7. On the other hand, this length must be $(n-d)!+n-1=(4-1)!+4-1=9$; contradiction. 
\end{proof}

\subsection{Usage of $\Diamond_{a,b}$}\label{usage-diamond-a-b}

Recall that $\Diamond_{a,b}$, where $a,b\in\{1,2,\ldots,n\}$, $a<b$, denotes the set of permissible substitutions in an $n$-permutation. For example, $a=1$ allows usage of the smallest element in all factors containing $\Diamond_{a,b}$, while $a=2$ allows usage of the next smallest element in these factors, and so on. 

\begin{theorem}\label{restr-Diamond} Let $n\geq 2$ and $a<b$. Then, necessary and sufficient conditions for existence of a u-p-word for $n$-permutations of the form $\Diamond_{a,b} u_2u_3\cdots u_{N}$ are  
\begin{itemize} 
\item $a=1$ and $\red(u_2u_3\cdots u_{n})=12\cdots (n-1)$, or 
\item $b=n$ and $\red(u_2u_3\cdots u_{n})=(n-1)(n-2)\cdots 1$.
\end{itemize}
\end{theorem}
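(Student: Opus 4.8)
The plan is to prove both directions. For sufficiency, suppose $a=1$ and $\red(u_2u_3\cdots u_n)=12\cdots(n-1)$. I would construct an explicit u-p-word whose first symbol is $\Diamond_{1,2}$ (equivalently, whose first factor is $\Diamond_{1,2}u_2\cdots u_n$ with the remaining tail order-isomorphic to the increasing $(n-1)$-permutation). The key observation is that the factor $\Diamond_{1,2}u_2\cdots u_n$ covers exactly those $n$-permutations $\pi_1\pi_2\cdots\pi_n$ with $\red(\pi_2\cdots\pi_n)=12\cdots(n-1)$ and $\pi_1\in\{1,2\}$, i.e.\ the two increasing-suffix permutations $12\cdots n$ and $21 3\cdots n$ (in reduced form: $234\cdots n 1$ versus $134\cdots n 2$-type, depending on how one places the leading small value). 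In the clustered graph language of Section~\ref{lin-ext-sec}, this $\Diamond_{1,2}$ simultaneously realizes the \emph{double edge} of Lemma~\ref{lem3} that enters the cluster with increasing signature: the pair of twins $x_1^-x_1\cdots x_{n-1}$ and $x_1^+x_1\cdots x_{n-1}$ where $x_1\cdots x_{n-1}=12\cdots(n-1)$. So beginning a u-word with this $\Diamond_{1,2}$ is exactly equivalent (at the start) to traversing that incoming double edge, and one continues by any Eulerian-type traversal of the remaining clustered graph, which exists by strong connectivity and balancedness (cf.\ the proof of Theorem~\ref{thm1}); adjoining the appropriate length-$(n-1)$ tail turns the resulting Hamiltonian path into a u-word by Remark~\ref{important-remark}. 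The case $b=n$ follows by applying the complement operation, which sends $\Diamond_{a,b}$ to $\Diamond_{n+1-b,\,n+1-a}$ and the increasing suffix to the decreasing suffix.

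For necessity, I would argue by the reach/covering bookkeeping of Definition~\ref{reach} and Proposition~\ref{diamond-at-first}-style reasoning. Write $u=\Diamond_{a,b}u_2u_3\cdots u_N$ and set $\pi=\red(u_2\cdots u_n)$. The first factor covers the $|\{a,b\}|=2$ permutations $\rho$ with $\red(\rho_2\cdots\rho_n)=\pi$ and $\rho_1$ taking the $a$-th or $b$-th relative rank among all $n$ values; no other factor of $u$ can cover a permutation with suffix pattern $\pi$ except possibly the factor $u_2\cdots u_{n+1}$ at the very next position (this is the non-cyclic loophole used in Lemma~\ref{structural}), and that factor covers at most one such permutation unless $u_{n+1}$ is itself special. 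Meanwhile the cluster with signature $\pi$ contains exactly $n$ permutations (one incoming edge per choice of inserted leading value), of which the twins account for the only double edge entering it (Lemma~\ref{lem3}). The $\Diamond_{a,b}$ covers a pair of permutations-with-suffix-$\pi$; for the whole cluster still to be coverable, that pair must be precisely the pair of twins entering the cluster — otherwise two of the $n$ required permutations with suffix $\pi$ can never be produced (the $\Diamond_{a,b}$ "uses up" positions that cannot be revisited and the tail of $u$ only passes through this cluster via its other, single, edges). The twins entering cluster $\pi$ are $x^-_{n-1}x_1\cdots x_{n-1}$ and $x^+_{n-1}x_1\cdots x_{n-1}$, so $\{a,b\}$ must be the pair of consecutive ranks straddling the value sitting in position $n-1$ of $\pi$; forcing this to be $\{1,2\}$ or $\{n-1,n\}$-type across \emph{all} the constraints — in particular using the increasing permutation which must be covered somewhere — pins down $\pi$ to be monotone and $\{a,b\}=\{1,2\}$ with $\pi$ increasing, or $\{a,b\}=\{n-1,n\}\ni n$ with $\pi$ decreasing. (Here one also invokes $n\geq 2$; for $n=2$ the statement degenerates to $\Diamond 1$ / $\Diamond_{1,2}$ directly.)

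The main obstacle I anticipate is the necessity direction, specifically ruling out non-monotone $\pi$. The covering count alone shows $\{a,b\}$ must be two consecutive ranks about a specific coordinate of $\pi$, but turning that into "$\pi$ is monotone" requires propagating the constraint: the increasing $n$-permutation $12\cdots n$ must be covered by \emph{some} factor of $u$, and combined with Lemma~\ref{structural}'s periodic-$\Diamond$ structure (the $\Diamond_{a,b}$ positions are forced to be $n$-periodic just as plain $\Diamond$s are, so there is a $\Diamond_{a,b}$ in every length-$n$ window) one shows the factor covering $12\cdots n$ must have its $\Diamond_{a,b}$ in the leading position and $a=1$ — or the mirror image. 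I would handle this by a case analysis on where, within its covering factor, the (unique-per-window) $\Diamond_{a,b}$ sits relative to the increasing permutation, closely paralleling the three-case analysis in the proof of Theorem~\ref{period-3-diamonds}, using that any $\Diamond_{a,b}$ not at the boundary, or at the boundary with $a\neq 1$ and $b\neq n$, lets us re-cover $12\cdots n$ from a shifted starting position. The bulk of the write-up is this bounded case analysis; the constructive direction is short once the clustered-graph dictionary between "leading $\Diamond_{1,2}$" and "the incoming double edge at the increasing cluster" is made explicit.
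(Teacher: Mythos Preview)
Your proposal has a genuine structural error that propagates through both directions: you identify the two permutations covered by $\Diamond_{a,b}u_2\cdots u_n$ with the \emph{incoming double edge} (the twins of Lemma~\ref{lem3}) at the cluster $C$ with signature $\pi=\red(u_2\cdots u_n)$, when the relevant feature is the \emph{loop} at $C$. Concretely, for $\pi=12\cdots(n-1)$ the twins entering $C$ are $x_{n-1}^{-}x_1\cdots x_{n-1}$ and $x_{n-1}^{+}x_1\cdots x_{n-1}$, which have first-element ranks $n-1$ and $n$; so ``$\Diamond_{a,b}$ realizes the incoming double edge'' would force $\{a,b\}=\{n-1,n\}$, directly contradicting the theorem's conclusion $a=1$. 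The paper's argument instead counts visits to $C$: the two permutations $\rho^{(a)},\rho^{(b)}$ covered by the first factor are two incoming edges to $C$ that the remaining walk $u_2\cdots u_N$ can no longer use, so that walk can visit at most $n-1$ of the $n$ permutations in $C$ --- which is only acceptable if one of $\rho^{(a)},\rho^{(b)}$ is \emph{itself} in $C$, i.e.\ corresponds to the loop at $C$. That forces $\pi$ monotone, and (for $\pi$ increasing) $\rho^{(a)}=12\cdots n$, hence $a=1$ with $b$ unconstrained. Your sufficiency sketch likewise goes astray by specializing to $b=2$; the paper shows that for any $b>1$ the first factor deletes the loop at $C$ together with one edge $C'\to C$ (where $C'\ni\rho^{(b)}$), leaving $C$ with out-degree exceeding in-degree by one and $C'$ with the reverse imbalance, so an Eulerian path $C\to C'$ exists and yields the desired u-p-word.

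A second, independent gap: you invoke Lemma~\ref{structural} to claim the $\Diamond_{a,b}$ positions are $n$-periodic, ``so there is a $\Diamond_{a,b}$ in every length-$n$ window.'' This is false for restricted diamonds --- the proof of Lemma~\ref{structural} relies essentially on $\Diamond$ taking \emph{all} $n$ values --- and indeed the very u-p-words whose existence the theorem asserts (e.g.\ $\Diamond_{1,2}254231$ for $n=3$) have exactly one $\Diamond_{a,b}$. The case analysis you propose in the style of Theorem~\ref{period-3-diamonds}, which presupposes a diamond in every window, therefore cannot get off the ground.
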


\begin{proof} Similarly to the proof of Lemma~\ref{structural}, consider the cluster $C$ corresponding to the signature ``$\red(u_2u_3\cdots u_n)$''. If $u_2u_3\cdots u_{n}$ is not monotone (increasing or decreasing) then because of the factor $\Diamond_{a,b} u_2u_3\cdots u_{n}$ we see that reaching the permutation $\red(u_2u_3\cdots u_{n+1})$ in $C$ (recall Definition~\ref{reach}) uses two edges, so that at least one permutation in $C$ will never be covered by $u$. On the other hand, one can see that exactly the same situation occurs if $\red(u_2u_3\cdots u_{n})=12\cdots (n-1)$ and $a\neq 1$ (if $a=1$ then one of the two edges mentioned above is a loop and there is no contradiction), and if $\red(u_2u_3\cdots u_{n})=(n-1)(n-2)\cdots 1$ and $b\neq n$ (again, if $b=n$ then one of the two edges is a loop giving no contradiction). 

On the other hand, if one of the two conditions are satisfied, then we have that the $\Diamond_{a,b}$ is responsible for removing an edge coming to the respective cluster $C$ with a monotone signature and the loop connected to $C$ from the clustered graph of overlapping permutations, as well as covering two permutations, one from $C$ and one from another cluster $C'$. The rest of the word $u_2u_3\cdots u_N$ corresponds to an Eulerian path beginning at $C$ and ending at $C'$, which exists because each cluster is balanced, except for $C$ (one extra out-edge) and $C'$ (one extra in-edge), and the graph is clearly still strongly connected. \end{proof}

An example of a u-word for $3$-permutations given by Theorem~\ref{restr-Diamond} is $\diamond_{1,5}243241$.

\section{Concluding remarks}\label{final-sec} 

This paper opens up a new research direction of shortening u-cycles and u-words for permutations that naturally extends analogous studies conducted for the celebrated de Bruijn sequences \cite{CKMS,Goeckner-et-al}. We were able to offer two different ways to approach the problem, namely, via linear extensions of posets, and via usage of (restricted) $\Diamond$s, and we discussed several existence and non-existence results related to the context.

Out of possible directions for further research, it would be interesting to prove, or disprove, Conjecture~\ref{conj1} and our guess that no distribution of $\Diamond$s in a word can result in a non-trivial construction of a u-p-word for $n$-permutations for $n\geq 3$. Moreover, it would be interesting to  extend the results in Section~\ref{possible-lengths-u-cycles-sec} to the case of $n$-permutations for $n\geq 5$, namely, to answer the following question:  Do there exist  u-words and u-cycles for $n$-permutations of length larger than $n$ but smaller than $n!-(n-1)!+n-1$ and $n!-(n-1)!$, respectively? Also, we would like to see some characterization theorems involving (more than one) $\Diamond_D$ for $D$ not necessarily of size $2$, thus extending the result of Theorem~\ref{restr-Diamond}. 

Some enumerative questions can be raised as well. For example, one should be able to count u-words of various lengths in  Theorem~\ref{thm1}, that should be based on the choice of $k$ cycles formed by double edges to be replaced by single edges (out of the total number of $(n-2)!$ cycles formed by double edges).

Finally, note that there are other ways to define  the concept of a universal cycle/word for permutations. For example, in \cite{NF} permutations are listed as consecutive substrings without using the notion of order-isomorphism. The ideas in this paper for shortening u-cycles/u-words for permutations can be used for other contexts, such as those in \cite{NF}.


\section*{Acknowledgments}

The authors are grateful to Sergey Avgustinovich for useful discussions related to the paper. Also, the first author is thankful to the Universit\'e de Bourgogne for its hospitality during the author's visit of it in November 2016. The second author is grateful to the Presidium of the Russian Academy of Sciences for supporting his research (Grant 0314-2015-0011). Finally, the authors are grateful to  the anonymous referees for reading carefully the manuscript and for providing many useful suggestions.

\end{document}